\newtheorem{theorem}{Theorem}[section]
\newtheorem{corollary}{Corollary}[section]
\newtheorem{lemma}{Lemma}[section]
\newtheorem{thmx}{Theorem}[section]
\newtheorem{remark}{Remark}[section]
\numberwithin{equation}{section}
\title{Zygmund-type inequalities for an operator preserving inequalities between polynomials}
\author{ Nisar. A. Rather, Suhail Gulzar and K.A. Thakur}
\begin{document}
\maketitle
\date{}
\footnotetext{\textbf{AMS Mathematics Subject Classification(2010)}: 26D10, 41A17.}
\footnotetext{\textbf{Keywords}: $L^{p}$ inequalities, $\mathcal B_n$-operators, polynomials.}
\begin{center} 
P.G.Department of Mathematics, Kashmir University, Hazratbal,
\end{center} 
\begin{center}
Srinagar-190006, India
\end{center}
\begin{center}
e-mail: sgmattoo@gmail.com
\end{center}
\begin{center} 
*Department of Mathematics, Degree College Ganderbal,
\end{center} 
\begin{center}
Ganderbal Kashmir- India
\end{center}
\begin{abstract}
 In this paper, we present certain new $L_p$ inequalities for $\mathcal B_{n}$-operators which include some known polynomial inequalities as special cases.
\end{abstract}
\section{Introduction and statement of results}
Let $\mathscr{P}_n$ denote the space of all complex polynomials $P(z)=\sum_{j=0}^{n}a_{j}{z}^{j}$ of degree $n$. For $P\in \mathscr{P}_n$, define
\[\left\|P(z)\right\|_{0}:=\exp\left\{\frac{1}{2\pi}\int_{0}^{2\pi}\log\left|
P(e^{i\theta})\right|d\theta\right\},\]
\[\left\|P(z)\right\|_{p}:=\left\{\frac{1}{2\pi}\int_{0}^{2\pi}\left|
P(e^{i\theta})\right|^{p}\right\}^{1/p},\,\, 1\leq p<\infty,\]
\[\left\|P(z)\right\|_{\infty}:=\underset{\left|z\right|=1}{\max}\left|P(z)\right|,\quad m(P,k):=\underset{\left|z\right|=k}{\min}\left|P(z)\right|,\,k>0\]
and denote for any complex function $\psi : \mathbb C\rightarrow \mathbb C$ the composite function of  
$P$ and $\psi$, defined by
$\left(P\circ\psi\right)(z):=P\left(\psi(z)\right)\,\,\,\,(z\in\mathbb C)$, as $P\circ\psi$.

If $P\in \mathscr{P}_n$, then
\begin{equation}\label{1}
\left\|P^{\prime}(z)\right\|_{p}\leq n\left\|P(z)\right\|_{p},\,\,\,\,\,\,p\geq 1
\end{equation} 
and
\begin{equation}\label{2}
\left\|P(Rz)\right\|_{p}\leq R^{n}\left\|P(z)\right\|_{p},\,\,\,R>1,\,\,\,\,p>0,
\end{equation}\\
Inequality \eqref{1} was found out by Zygmund \cite{18} whereas inequality \eqref{2} is a simple 
consequence of a result of Hardy \cite{8}. Arestov \cite{2} proved that \eqref{1} remains true for $0<p<1$ as well. For $p=\infty$, the inequality \eqref{1} is due to Bernstein (for reference, see \cite{11,15,16}) whereas the case $p=\infty$ of inequality \eqref{2} is a simple consequence of the maximum modulus principle ( see \cite{11,12,15}). Both the inequalities \eqref{1} and \eqref{2} can be sharpened if we restrict ourselves to the class of polynomials having no zero in $\left|z\right|<1.$ In fact, if $P\in \mathscr{P}_n$ and $ P(z) \neq 0$ in $\left|z\right|< 1$, then inequalities \eqref{1} and \eqref{2} can be respectively replaced by
\begin{equation}\label{3}
\left\|P^{\prime}(z)\right\|_{p}\leq n\frac{\left\|P(z)\right\|_{p}}{\left\|1+z\right\|_{p}},\,\,\,\,p\geq 0
\end{equation} 
and
\begin{equation}\label{4}
\left\|P(Rz)\right\|_{p}\leq \frac{\left\|R^{n}z+1\right\|_{p}}{\left\|1+z\right\|_{p}}\left\|P(z)\right\|_{p},\,\,\,R>1,\,\,\, p>0.
\end{equation}
Inequality \eqref{3} is due to De-Bruijn \cite{6}(see also \cite{3}) for $p\geq 1$. Rahman and Schmeisser \cite{1} extended it for $0<p<1,$ whereas the inequality \eqref{4} was proved by Boas and Rahman \cite{5} for $p\geq 1$ and later it was extended for $0<p<1$ by Rahman and Schmeisser \cite{14}. For $p = \infty$, the inequality \eqref{3} was conjectured by Erd\"{o}s and later verified by Lax \cite{9} whereas inequality \eqref{4} was proved by Ankeny and Rivlin \cite{1}.

As a compact generalization of inequalities \eqref{3} and \eqref{5}, Aziz and Rather \cite{4} proved that if $P\in \mathscr{P}_n$ and $P(z)$ does not vanish in $|z|<1,$ then for $\alpha,\beta\in\mathbb{C}$ with $|\alpha|\leq 1,$ $|\beta|\leq 1,$ $R>r \geq 1$ and $p>0$,
\begin{equation}\label{5}
\left\|P(Rz)+\phi_{n}\left(R, r, \alpha, \beta\right) P(rz)\right\|_{p}\leq\dfrac{C_p}{\|1+z\|_p} \left\|P(z)\right\|_{p}
\end{equation}
where
\begin{equation}\label{6}
C_p=\left\|\left(R^n+\phi_{n}\left(R, r, \alpha, \beta\right)r^n\right)z+(1+\phi_n(R,r,\alpha,\beta))\right\|_p
\end{equation}
and
\begin{equation}\label{phi}
\phi_{n}\left(R, r, \alpha, \beta\right)_n=\beta\left\{\left(\dfrac{R+1}{r+1}\right)^n-|\alpha|\right\}-\alpha.
\end{equation}
If we take $\beta=0,\,\alpha=1$ and $r=1$ in \eqref{5} and divide two sides of \eqref{5} by $R-1$ then make $R\rightarrow 1,$ we obtain inequality \eqref{3}. Whereas inequality \eqref{4} is obtained from \eqref{5} by taking $\alpha=\beta=0.$

Rahman \cite{13} (see also Rahman and Schmeisser \cite[p. 538]{15}) introduced a class $\mathcal B_n$ of operators $B$ that maps $ P \in\mathscr{P}_n$ into itself. That is, the operator $B$ carries $P \in \mathscr{P}_n$ into
\begin{equation}\label{8}
B[P](z):= \lambda_{0}P(z)+\lambda_{1}\left(\frac{nz}{2}\right)\frac{P^{\prime}(z)}{1!}+\lambda_{2}\left(\frac{nz}{2}\right)^{2}\frac{P^{\prime\prime}(z)}{2!}
\end{equation}\label{9}
where $\lambda_{0},\lambda_{1}$ and $\lambda_{2}$ are such that all the zeros of 
\begin{equation}
u(z):= \lambda_{0}+C(n,1)\lambda_{1}z+C(n,2)\lambda_{2}z^{2},\,\,\,C(n,r)= n!/r!(n-r)!,
\end{equation}
lie in the half plane
\begin{equation}\label{10}
|z|\leq |z-n/2|
\end{equation}
and proved that if $P\in \mathscr{P}_n$ and $P(z)$ does not vanish in $|z|<1$, then
\begin{equation}\label{11}
\left|B[P\circ\sigma](z)\right|\leq \frac{1}{2}\left\{R^{n}\left|\Lambda_n\right|+\left|\lambda_{0}\right|\right\}\left\|P(z)\right\|_{\infty} \,\,\,\mbox{} for\,\mbox{}\,\,\,|z|= 1,
\end{equation}
(see \cite[Inequalities (5.2) and (5.3)]{13}) where $\sigma(z)=Rz$, $R\geq1$ and 
\begin{equation}\label{12}
\Lambda_{n}:=\lambda_{0}+\lambda_{1}\frac{n^{2}}{2} +\lambda_{2}\frac{n^{3}(n-1)}{8}.
\end{equation}
 As an extension of inequality \eqref{11} to $L_p$-norm, recently W.M. Shah and A. Liman \cite{17} 
while seeking the desired extension, they \cite[Theorem 2]{17} have made an incomplete attempt by claiming to have proved that
if $P\in \mathscr{P}_n$ and $P(z)$ does not vanish in $\left|z\right|<1$, then for each $R\geq 1$ and $p \geq 1$,
\begin{equation} \label{13}
\left\|B[P\circ\sigma](z)\right\|_p \leq \frac{R^{n}|\Lambda_{n}|+|\lambda_{0}|}{\left\|1+z\right\|_{p}}\left\|P(z)\right\|_p .
\end{equation}
where $B\in B_{n}$ and $\sigma(z)= Rz$ and $\Lambda_n$ is defined by \ref{12}.

Rather and Shah \cite{rs} pointed an error in the proof of \eqref{13}, they not only provided a correct proof but also extended it for $0\leq p<1$ as well. They proved:
\begin{thmx}\label{ta}
\textit {If $P\in \mathscr{P}_n$ and $P(z)$ does not vanish for $|z|<1,$ then for $0\leq p<\infty$ and $R> 1,$
\begin{equation}\label{tae}
\left\|B[P\circ\sigma](z)\right\|_p\leq \dfrac{\left\|R^n\Lambda_nz+\lambda_0\right\|_p}{\|1+z\|_p}\left\|P(z)\right\|_p,
\end{equation}
 $B\in \mathcal{B}_n,$ $\sigma(z)=Rz$ and $\Lambda_n$ is defined by \eqref{12}. The result is sharp as shown by $P(z)=az^n+b,$ $|a|=|b|=1.$}
\end{thmx}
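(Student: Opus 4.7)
My plan is to combine a pointwise comparison of $B[P\circ\sigma]$ with its conjugate-reciprocal analogue on $|z|=1$, and then an Arestov-type convexity lemma to pass from the resulting sup-norm statement to $L_p$. This is the standard template by which the inequalities \eqref{3} and \eqref{4} are proved for non-vanishing polynomials, with Rahman's \eqref{11} as the $p=\infty$ prototype. Specialising \eqref{tae} to $P(z)=z^n+1$ (so that $B[P\circ\sigma](z)=R^n\Lambda_n z^n+\lambda_0$) and using the rotation identities $\|R^n\Lambda_n z^n+\lambda_0\|_p=\|R^n\Lambda_n z+\lambda_0\|_p$ and $\|z^n+1\|_p=\|1+z\|_p$ already confirms both the sharpness claim and the correctness of the target form.

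For the pointwise step, introduce $Q(z):=z^n\overline{P(1/\bar z)}$. Since $P$ has no zeros in $|z|<1$, one has $|Q(z)|=|P(z)|$ on $|z|=1$, and for every $|\gamma|>1$ the pencil $\gamma P+Q$ is non-vanishing in $|z|<1$. Applying $B[\cdot\circ\sigma]$ to this pencil and invoking \eqref{11} yields the Rahman-type estimate $|B[P\circ\sigma](z)|+|B[Q\circ\sigma](z)|\leq(R^n|\Lambda_n|+|\lambda_0|)\|P\|_\infty$ on $|z|=1$; the non-vanishing hypothesis on $P$ then upgrades this to the pointwise comparison $|B[P\circ\sigma](z)|\leq|B[Q\circ\sigma](z)|$ for $|z|=1$.

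For the $L_p$ integration I would appeal to Arestov's convexity lemma. For $|\gamma|=1$, consider the auxiliary operator $\Psi_\gamma[P]:=B[P\circ\sigma]+\gamma\lambda_0 Q$; it maps $\mathscr{P}_n$ into itself and, under hypothesis \eqref{10}, belongs to the Arestov class. The key computation is $\Psi_\gamma[z^n+1](z)=(R^n\Lambda_n+\gamma\lambda_0)z^n+(1+\gamma)\lambda_0$ (from $B[z^n]=\Lambda_n z^n$ and $B[1]=\lambda_0$), so for every non-decreasing convex $\phi:\mathbb{R}\to\mathbb{R}_{\geq 0}$,
$$\int_0^{2\pi}\phi\bigl(\log|\Psi_\gamma[P](e^{i\theta})|\bigr)d\theta \leq \int_0^{2\pi}\phi\bigl(\log|R^n\Lambda_n+\gamma\lambda_0|\,|P(e^{i\theta})|\bigr)d\theta.$$
Taking $\phi(t)=e^{pt}$ for $p\geq 1$, integrating both sides over $\gamma=e^{i\alpha}$ on the unit circle, applying Fubini, and using the identity $\tfrac{1}{2\pi}\int_0^{2\pi}|A+e^{i\alpha}B|^p\,d\alpha=\bigl\||A|+|B|z\bigr\|_p^p$ on each side (with $|Q(e^{i\theta})|=|P(e^{i\theta})|$ on the left) would isolate $\|1+z\|_p^p$ on the left and $\|R^n\Lambda_n z+\lambda_0\|_p^p$ on the right, giving \eqref{tae} after cancellation; the case $0\leq p<1$ follows from the same lemma applied to a suitable convex $\phi$, and $p=0$ by letting $p\to 0^+$.

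The main obstacle I anticipate is the Arestov step: verifying that $\Psi_\gamma$ is Arestov-admissible uniformly in $|\gamma|=1$ rests on the zero-localization hypothesis \eqref{10} that defines $\mathcal{B}_n$ and is the very reason Rahman singled out this class. Once admissibility is in hand, the Fubini-based extraction of the numerator $\|R^n\Lambda_n z+\lambda_0\|_p$ and the denominator $\|1+z\|_p$ from a single convexity inequality proceeds mechanically.
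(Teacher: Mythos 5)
Your overall template (pointwise comparison with a conjugate-reciprocal companion, then Arestov plus rotation averaging) is the right one and is essentially the paper's, but two of your steps do not work as written. First, the pointwise inequality $|B[P\circ\sigma](z)|\leq|B[Q\circ\sigma](z)|$ on $|z|=1$ (with $Q=P^*$) cannot be ``upgraded'' from the sup-norm estimate \eqref{11}: a bound on $|B[P\circ\sigma]|+|B[Q\circ\sigma]|$ by $(R^n|\Lambda_n|+|\lambda_0|)\|P\|_\infty$ says nothing about which of the two terms is larger at a given point. The correct route (Lemmas \ref{l3}--\ref{l4} of the paper, specialized to $\phi_n=0$) is: for $|\lambda|>1$ the polynomial $P-\lambda Q$ has all zeros in $|z|\leq1$, hence $(P-\lambda Q)(Rz)$ has all zeros in $|z|<1$, hence by the zero-preservation property of $B$ (Lemma \ref{l2}) $B[P\circ\sigma]-\lambda B[Q\circ\sigma]$ has no zeros in $|z|>1$, and choosing $\lambda$ suitably at a hypothetical bad point gives the comparison. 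This gap is fixable, but it is a different argument from the one you give.

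The fatal problem is the $L_p$ step. Your auxiliary operator $\Psi_\gamma[P]=B[P\circ\sigma]+\gamma\lambda_0 P^*$ is not of Arestov's form $C_\gamma P(z)=\sum_j\gamma_j a_jz^j$: since $P^*(z)=\sum_j\overline{a_{n-j}}z^j$, the map $P\mapsto\Psi_\gamma[P]$ conjugates and reverses the coefficients, so Lemma \ref{l5} simply does not apply to it, and computing $\Psi_\gamma[z^n+1]$ does not determine an admissible constant $c(\gamma,n)$. Moreover, even granting a bound, your companion term has modulus $|\lambda_0|\,|P(e^{i\theta})|$ on $|z|=1$, which does \emph{not} dominate $|B[P\circ\sigma](e^{i\theta})|$ in general (take $P=z^n+1$, where $|B[P\circ\sigma](1)|\approx R^n|\Lambda_n|\gg2|\lambda_0|$), so the quantity $s$ in your final rotation-averaging step need not satisfy $s\geq1$ and the factor $\|1+z\|_p$ cannot be isolated; tellingly, the pointwise comparison you established in step one is never used. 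The correct companion, as in Lemma \ref{l6}, is $\bigl(B[P^*\circ\sigma]\bigr)^*(z)=z^n\overline{B[P^*\circ\sigma](1/\bar z)}$: the double conjugation restores $\mathbb{C}$-linearity in the $a_j$, so that $C_\eta P=e^{i\eta}B[P\circ\sigma]+\bigl(B[P^*\circ\sigma]\bigr)^*$ is a genuine multiplier operator with $c=\bigl|R^n\Lambda_ne^{i\eta}+\overline{\lambda_0}\bigr|$ (admissibility being checked by Rouch\'e from the pointwise comparison), and its modulus on $|z|=1$ equals $|B[P^*\circ\sigma](z)|\geq|B[P\circ\sigma](z)|$, which is exactly what makes $\int_0^{2\pi}|s+e^{i\eta}|^p\,d\eta\geq\int_0^{2\pi}|1+e^{i\eta}|^p\,d\eta$ applicable and yields \eqref{tae}.
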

Recently, Rather and Suhail Gulzar \cite{rs} obtained the following result which is a generalization of Theorem \ref{ta}.
\begin{thmx}\label{tb}
If $P\in \mathscr{P}_n$ and $P(z)$ does not vanish for $|z|<1,$ then for $\alpha\in \mathbb{C}$ with $|\alpha|\leq 1,$ $0\leq p<\infty$ and $R>1,$
\begin{align}\label{tbe}
\left\|B[P\circ\sigma](z)-\alpha B[P](z)\right\|_p\leq \dfrac{\left\|(R^n-\alpha)\Lambda_nz+(1-\alpha)\lambda_0\right\|_p}{\|1+z\|_p}\left\|P(z)\right\|_p,
\end{align}
where
 $B\in \mathcal{B}_n,$ $\sigma(z)=Rz$ and $\Lambda_n$ is defined by \eqref{12}. The result is best possible and equality in \eqref{tbe} holds for $P(z)=az^n+b,$ $|a|=|b|=1.$
\end{thmx}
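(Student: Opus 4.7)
The plan is to follow the classical two-step route for $L_p$-extensions of Bernstein-type operator inequalities: first establish a pointwise ($L_\infty$-flavoured) comparison on the unit circle between $T[P]$ and $T[Q]$, where $T[P](z):=B[P\circ\sigma](z)-\alpha B[P](z)$ and $Q(z):=z^n\overline{P(1/\bar z)}$ is the reciprocal of $P$; second, invoke Arestov's theorem on admissible operators to promote the resulting bound to the full $L_p$ statement for every $p\in[0,\infty)$. Observe that $T$ acts diagonally on the monomial basis via $T[z^j]=(R^j-\alpha)\mu_j z^j$, where $\mu_j:=\lambda_0+\lambda_1 jn/2+\lambda_2 n^2 j(j-1)/8$, so that $T[z^n]=(R^n-\alpha)\Lambda_n z^n$ and $T[1]=(1-\alpha)\lambda_0$ precisely match the symbol on the right-hand side of \eqref{tbe}.

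The main pointwise lemma is
$$|T[P](z)|\ \leq\ |T[Q](z)|\quad\text{for }|z|=1,$$
valid whenever $P\in\mathscr{P}_n$ has no zero in $|z|<1$, $|\alpha|\le 1$, and $R>1$. To establish it I would argue via Rouch\'e: since $|P|=|Q|$ on $|z|=1$, for any $\lambda$ with $|\lambda|>1$ the polynomial $P-\lambda Q$ has all its zeros in $|z|\le 1$, and by linearity $T[P]-\lambda T[Q]=T[P-\lambda Q]$. The zero-location hypothesis \eqref{10} on $u(z)$ that defines the class $\mathcal{B}_n$, together with $R>1$ and $|\alpha|\le 1$, propagates this location through the perturbation $B[\cdot\circ\sigma]-\alpha B[\cdot]$ and forces $T[P-\lambda Q]$ to be non-vanishing in $|z|\le 1$. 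Hence $|T[P](z)/T[Q](z)|\le|\lambda|$ on $|z|\le 1$, and letting $|\lambda|\to 1^+$ yields the claim.

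With the lemma in hand, combining it with $|Q(e^{i\theta})|=|P(e^{i\theta})|$ provides, for every $\gamma\in\mathbb{C}$ with $|\gamma|=1$ and every $|z|=1$, the uniform bound
$$|T[P](z)+\gamma T[Q](z)|\ \leq\ \bigl|(R^n-\alpha)\Lambda_n+\gamma(1-\alpha)\lambda_0\bigr|\,\|P\|_\infty.$$
This is precisely the admissibility input required by Arestov's theorem. Applying Arestov with the convex non-decreasing function $\Phi(x)=x^p$ for $p>0$ (the case $p=0$ following by the standard limiting argument for $\|\cdot\|_0$) and integrating over $\gamma$ on the unit circle yields the claimed $L_p$-inequality after dividing through by $\|1+z\|_p$. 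Sharpness is read off directly from the extremal polynomial $P(z)=az^n+b$ with $|a|=|b|=1$, for which $Q=\bar b z^n+\bar a$ and equality is forced at every step.

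The principal obstacle is the pointwise lemma itself, since the perturbed operator $T=B[\cdot\circ\sigma]-\alpha B[\cdot]$ is not a member of $\mathcal{B}_n$, so the zero-preservation property that drives the Rouch\'e argument has to be re-derived from the $\mathcal{B}_n$-structure rather than quoted. The hypotheses $|\alpha|\le 1$ and $R>1$ enter decisively at exactly this step, guaranteeing that all intermediate polynomials remain within the half-plane \eqref{10} required for Rahman's $\mathcal{B}_n$-formalism to apply.
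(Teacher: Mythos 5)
Your overall architecture --- a pointwise comparison on $|z|=1$ between $T[P]$ and $T[P^{*}]$, followed by Arestov's theorem and an integration over a unimodular parameter --- is exactly the route the paper takes: Theorem \ref{tb} is obtained there as the case $r=1$, $\beta=0$, $\delta=0$ of Theorem \ref{t2} (through Corollary \ref{c1}), and Theorem \ref{t2} is proved by precisely this comparison-plus-Arestov scheme (Lemmas \ref{l3}, \ref{l4}, \ref{l6} and Lemma \ref{l5}). The gaps are in the execution. First, the Rouch\'e step is both misstated and missing its mechanism: from $g:=P-\lambda P^{*}$ having all zeros in $|z|\leq 1$ (for $|\lambda|>1$, after the usual handling of boundary zeros) one must first show that the scalar combination $g(Rz)-\alpha g(z)$ has all its zeros in $|z|<1$; this uses the growth estimate $|g(Rz)|\geq ((R+1)/2)^{n}|g(z)|>|g(z)|$ on $|z|=1$ (Lemma \ref{l1} with $r=1$) and Rouch\'e, and only then does one apply the quotable fact (Lemma \ref{l2}, from Marden) that $B\in\mathcal B_{n}$ preserves zeros in $|z|\leq 1$. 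Thus $T[P]-\lambda T[P^{*}]$ has all its zeros in $|z|\leq 1$, i.e.\ it is non-vanishing for $|z|>1$ --- not ``non-vanishing in $|z|\leq 1$'' as you wrote --- and nothing has to be ``re-derived from the $\mathcal B_{n}$-structure'': condition \eqref{10} is a condition on $\lambda_{0},\lambda_{1},\lambda_{2}$ that enters only through Lemma \ref{l2}, while $R>1$ and $|\alpha|\leq 1$ enter through the growth/Rouch\'e step.

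Second, your appeal to Arestov is off target: admissibility is a zero-location--preservation property of a coefficient-multiplier operator, not a sup-norm bound, and the map $P\mapsto T[P]+\gamma T[P^{*}]$ is not even linear in the coefficients of $P$ (the $P^{*}$ part is conjugate-linear), so Arestov's theorem does not apply to it as written. One must instead work with the reversed conjugate, i.e.\ with $P\mapsto e^{i\eta}T[P]+\left(T[P^{*}]\right)^{*}$, which \emph{is} a multiplier operator, whose extreme coefficients yield the constant $|e^{i\eta}(R^{n}-\alpha)\Lambda_{n}+(1-\bar{\alpha})\bar{\lambda}_{0}|$, and whose admissibility (no zeros in $|z|<1$ whenever $P$ has none) is deduced from the pointwise lemma, the maximum modulus principle and Rouch\'e; this is the content of Lemma \ref{l6}. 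The uniform bound you assert for $|T[P](z)+\gamma T[P^{*}](z)|$ does not follow merely from ``combining'' the pointwise lemma with $|P^{*}|=|P|$ on $|z|=1$, and it is not what Arestov requires. Finally, the factor $\|1+z\|_{p}$ does not appear by ``dividing through'': it comes from setting $s=|T[P^{*}](e^{i\theta})|/|T[P](e^{i\theta})|\geq 1$ (by the pointwise lemma) and using $\int_{0}^{2\pi}|s+e^{i\gamma}|^{p}\,d\gamma\geq\int_{0}^{2\pi}|1+e^{i\gamma}|^{p}\,d\gamma$ pointwise in $\theta$ before comparing the $\gamma$-integral with the Arestov bound. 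Each of these defects is repairable, and once repaired your argument coincides with the paper's.
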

If we take $\alpha=0$ in Theorem \ref{tb}, we obtain Theorem \ref{ta}.

In this paper, we investigating the dependence of
$$ \left\|B[P\circ\sigma](z) + \phi_{n}\left(R, r, \alpha, \beta\right) B[P\circ\rho](z)\right\|_{p} $$
on $\left\|P(z)\right\|_p$ for $\alpha$, $\beta\in\mathbb{C}$ with $|\alpha| \leq 1$,\,\,$|\beta| \leq 1$, $R >r \geq1$, $0\leq p <\infty$, 
$\sigma(z):=Rz$, $\rho(z):=rz$ and
$\phi_{n}\left(R, r, \alpha, \beta\right)$ is given by \eqref{phi}
and establish certain generalized $L_p$-mean extensions of the inequality \eqref{11} for $0\leq p <\infty$ and also a generalization of \eqref{5}.  In this direction, we first  present the following result which is a compact generalization of the inequalities \eqref{3}, \eqref{4}, \eqref{5} and \eqref{11} for $0 \leq p < 1$ as well.
\begin{theorem}\label{t1}
If $P\in \mathscr{P}_n$ and $P(z)$ does not vanish in $\left|z\right|<1$, then for then for $\alpha,\beta\in\mathbb{C}$ with $|\alpha|\leq1$, $|\beta|\leq1$, $R>r\geq1$ and $0\leq p <\infty $,
\begin{align}\label{t1e}\nonumber
&\left\|B[P\circ\sigma](z)+\phi_{n}(R,r,\alpha,\beta) B[P\circ\rho](z)\right\|_p\\&\qquad\qquad\qquad\leq \frac{ \left\|\left(R^{n}+\phi_{n}(R,r,\alpha,\beta)r^{n}\right)\Lambda_{n}z+\left(1+\phi_{n}(R,r,\alpha,\beta)\right)\lambda_{0}\right\|_{p}}{\left\|1+z\right\|_{p}}\left\|P(z)\right\|_p 
\end{align}
where $B \in\mathcal B_n$, $\sigma(z):=Rz$, $\rho(z):=rz$, $\Lambda_n$ and
$\phi_{n}\left(R, r, \alpha, \beta\right)$ are defined by\eqref{phi} and \eqref{12} respectively.
The result is best possible and equality in \eqref{t1e} holds for $P(z)=az^{n}+b,|a|=|b|\neq0$
\end{theorem}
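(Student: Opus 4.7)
The plan is to follow the architecture of the proofs of Theorems~\ref{ta} and \ref{tb}, generalising from a single dilation to the pair $\sigma(z)=Rz$, $\rho(z)=rz$. The argument splits into three stages: a pointwise polynomial estimate comparing $P$ and its reciprocal $Q$, an upgrade to the operator level via the preservation property that \emph{defines} the class $\mathcal B_n$, and an Arestov-type integration to pass from sup-norm to $L_p$.

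\emph{Stage one.} By replacing $P(z)$ with $P(tz)$ for $t<1$ and letting $t\to 1^-$ at the end, I may assume $P$ has no zeros in the closed unit disk. Set $Q(z):=z^n\overline{P(1/\bar z)}$; then $Q\in\mathscr P_n$ has all its zeros in $|z|<1$ and $|Q(z)|=|P(z)|$ on $|z|=1$. The key pointwise estimate, which underlies \eqref{5}, asserts
\begin{equation*}
|P(Rz)+\phi_n(R,r,\alpha,\beta)\,P(rz)|\;\le\;|Q(Rz)+\phi_n(R,r,\alpha,\beta)\,Q(rz)|,\qquad |z|\ge 1.
\end{equation*}
The specific form \eqref{phi} of $\phi_n$ is chosen precisely so that a Schur--Cohn/Rouch\'e comparison of the ratio $\bigl(P(Rz)+\phi_n P(rz)\bigr)/\bigl(Q(Rz)+\phi_n Q(rz)\bigr)$ yields modulus at most one on $|z|=1$; a consequence is that $g(z):=Q(Rz)+\phi_n Q(rz)$ has all its zeros in $|z|\le 1$.

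\emph{Stage two.} Invoke the defining property of $\mathcal B_n$ from Rahman \cite{13}: if $f,g\in\mathscr P_n$ satisfy $|f(z)|\le|g(z)|$ on $|z|=1$ and $g$ has all its zeros in $|z|\le 1$, then every $B\in\mathcal B_n$ obeys $|B[f](z)|\le|B[g](z)|$ for $|z|\ge 1$. Taking $f=P\circ\sigma+\phi_n P\circ\rho$ and $g=Q\circ\sigma+\phi_n Q\circ\rho$ and using linearity of $B$ gives
\begin{equation*}
|B[P\circ\sigma](z)+\phi_n B[P\circ\rho](z)|\;\le\;|B[Q\circ\sigma](z)+\phi_n B[Q\circ\rho](z)|,\qquad |z|=1.
\end{equation*}

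\emph{Stage three.} Write the two sides of the preceding display as $|F(z)|$ and $|G(z)|$. In terms of the coefficients of $P$, the pair $(F,G)$ is the image of $P$ under a family of linear operators admissible in Arestov's sense \cite{2}. Applying Arestov's theorem with $\Phi(t)=t^p$ to $F+\gamma G$ for unimodular $\gamma$ and integrating in both $\theta\in[0,2\pi]$ and $\gamma$ on the unit circle extracts the stated $L_p$ bound; the explicit constant $\|(R^n+\phi_n r^n)\Lambda_n z+(1+\phi_n)\lambda_0\|_p/\|1+z\|_p$ materialises from the elementary identity $B[az^n+b]=\Lambda_n az^n+\lambda_0 b$, visible from \eqref{8} and \eqref{12}. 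The case $p=0$ then follows by letting $p\to 0^+$ in the usual way.

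The main obstacle is Stage~two: verifying that $g=Q\circ\sigma+\phi_n Q\circ\rho$ has all its zeros in $|z|\le 1$ requires delicate use of the specific formula \eqref{phi} for $\phi_n$, because the two-parameter weight couples the dilated factors $Q(Rz)$ and $Q(rz)$ in a nontrivial way. Once this zero-location is secured, Rahman's preservation and the Arestov integration parallel the single-dilation arguments of \cite{rs}. Sharpness in \eqref{t1e} is then verified by direct substitution of $P(z)=az^n+b$ with $|a|=|b|\ne 0$: one checks $\|\Lambda_n a(R^n+\phi_n r^n)z^n+\lambda_0 b(1+\phi_n)\|_p=\|(R^n+\phi_n r^n)\Lambda_n z+(1+\phi_n)\lambda_0\|_p$ by a rotation (which absorbs the phases of $a$ and $b$), matching the right-hand side of \eqref{t1e}.
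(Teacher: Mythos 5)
Your plan is essentially the paper's own argument: Stages one and two together are exactly the content of Lemmas \ref{l3} and \ref{l4} (the comparison property you invoke is not the \emph{defining} property of $\mathcal B_n$ but the consequence of Lemma \ref{l2} combined with the Rouch\'e/growth argument based on Lemma \ref{l1}, which is also where the zero-location of $Q\circ\sigma+\phi_{n}Q\circ\rho$ that you flag as the main obstacle is settled), and Stage three is Lemma \ref{l6} followed by the averaging over the unimodular parameter, which is precisely how the paper obtains the theorem (as the $\delta=0$ case of Theorem \ref{t2}). The one point to repair in Stage three is that $P\mapsto B[Q\circ\sigma]+\phi_{n}B[Q\circ\rho]$ is conjugate-linear in the coefficients of $P$, hence not an operator of Arestov's form; as in Lemma \ref{l6} one must instead use its conjugate-reciprocal $\big(B[P^{*}\circ\sigma]\big)^{*}+\phi_{n}\left(R,r,\bar{\alpha},\bar{\beta}\right)\big(B[P^{*}\circ\rho]\big)^{*}$, which has the same modulus on $|z|=1$ and turns $C_{\gamma}$ into a genuine admissible diagonal operator.
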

\begin{remark}
\textnormal{If we take $\lambda_1=\lambda_2=0$ in \eqref{t1e}, we obtain inequality \eqref{5}.}
\end{remark}
For $\beta=0,$ inequality \eqref{t1e} reduces the following result.
\begin{corollary}\label{c1}
\textit{If} $P\in \mathscr{P}_n$ \textit{and} $P(z)$ \textit{does not vanish in} $\left|z\right|<1$, \textit{then for every real or complex number} $\alpha$ with $|\alpha|\leq1$, $R > r \geq 1$ \textit{and} $0\leq p <\infty$,
\begin{equation}\label{c1e}
\left\|B[P\circ\sigma](z)- \alpha B[P\circ\rho](z)\right\|_p\leq \frac{\left\|(R^{n}-\alpha r^{n})\Lambda_{n}z+(1-\alpha)\lambda_{0}\right\|_{p}}{\left\|1+z\right\|_{p}}\left\|P(z)\right\|_p 
\end{equation}
where  $B \in\mathcal B_n$, $\sigma(z):=Rz$, $\rho(z):=rz$ and $\Lambda_n$ is defined by \eqref{12}. The result is best possible and equality in \eqref{c1e} holds for $P(z)=az^{n}+b,\,\,\,|a|=|b|\neq0$.
\end{corollary}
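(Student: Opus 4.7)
The plan is to derive Corollary \ref{c1} directly from Theorem \ref{t1} by specializing the second parameter to $\beta = 0$. First I would evaluate $\phi_{n}(R, r, \alpha, 0)$ using the definition \eqref{phi}: the bracketed expression $\left(\frac{R+1}{r+1}\right)^{n} - |\alpha|$ is multiplied by $\beta$, so it vanishes when $\beta = 0$, and what remains is $\phi_{n}(R, r, \alpha, 0) = -\alpha$. This single observation does essentially all the work.

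With this value of $\phi_{n}$ in hand, I would substitute into the inequality \eqref{t1e} of Theorem \ref{t1}. On the left-hand side, the operator combination $B[P\circ\sigma](z) + \phi_{n}(R, r, \alpha, \beta)\, B[P\circ\rho](z)$ becomes $B[P\circ\sigma](z) - \alpha\, B[P\circ\rho](z)$, which matches the left-hand side of \eqref{c1e} verbatim. On the right-hand side, the coefficient of $z$ in the numerator of the bound, namely $R^{n} + \phi_{n}(R, r, \alpha, \beta)\, r^{n}$, collapses to $R^{n} - \alpha r^{n}$, and the constant term $1 + \phi_{n}(R, r, \alpha, \beta)$ collapses to $1 - \alpha$. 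Consequently the numerator equals $\|(R^{n} - \alpha r^{n})\Lambda_{n} z + (1 - \alpha)\lambda_{0}\|_{p}$, while the denominator $\|1 + z\|_{p}$ and the trailing factor $\|P(z)\|_{p}$ are unchanged, giving precisely \eqref{c1e}.

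For sharpness I would observe that the extremal polynomial $P(z) = az^{n} + b$ with $|a| = |b| \neq 0$ producing equality in \eqref{t1e} continues to produce equality in \eqref{c1e} after the substitution $\beta = 0$, since the equality analysis from Theorem \ref{t1} depends only on the admissibility constraints $|\alpha| \leq 1$, $|\beta| \leq 1$, which are trivially preserved. There is no substantive obstacle, as the corollary is a pure specialization of Theorem \ref{t1}; the only step requiring genuine care is the correct evaluation of $\phi_{n}$ at $\beta = 0$ from \eqref{phi} and the bookkeeping needed to confirm that the two numerator coefficients simplify as claimed. Accordingly, the complete proof should occupy only a few lines.
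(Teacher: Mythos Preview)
Your proposal is correct and matches the paper's approach exactly: the paper states immediately before Corollary~\ref{c1} that the result follows from Theorem~\ref{t1} by taking $\beta = 0$, and your proof simply carries out this substitution, noting that $\phi_{n}(R,r,\alpha,0) = -\alpha$.
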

\begin{remark}
\textnormal{For taking $\alpha=0$ in \eqref{c1e}, we obtain Theorem \eqref{ta} and for $r=1$ in \eqref{c1e}, we get Theorem \ref{tb}.}
\end{remark}
Instead of proving Theorem \ref{t1}, we prove the following more general result which includes Theorem \ref{t1} as a special case.
\begin{theorem}\label{t2}
If $P\in \mathscr{P}_n$ and $P(z)$ does not vanish in $\left|z\right|<1$, then for then for $\alpha,\beta,\delta\in\mathbb{C}$ with $|\alpha|\leq1$, $|\beta|\leq1, $ $|\delta|\leq 1,$ $R>r\geq1$ and $0\leq p <\infty $,
\begin{align}\nonumber\label{t2e}
 \Bigg\|B[P\circ\sigma](z)&+ \phi_{n}\left(R, r, \alpha, \beta\right)B[P\circ\rho](z)\\\nonumber+\delta&\dfrac{\Big(| R^n+\phi_{n}\left(R, r, \alpha, \beta\right) r^n||\Lambda_n|-|1+\phi_{n}\left(R, r, \alpha, \beta\right)||\lambda_0|\Big)m}{2}\Bigg\|_p\\
   &\leq \frac{ \left\|\left(R^{n}+\phi_{n}(R,r,\alpha,\beta)r^{n}\right)\Lambda_{n}z+\left(1+\phi_{n}(R,r,\alpha,\beta)\right)\lambda_{0}\right\|_{p}}{\left\|1+z\right\|_{p}}\left\|P(z)\right\|_p 
\end{align}
where $B \in B_n$, $\sigma(z):=Rz$, $\rho(z):=rz$, $\Lambda_n$ and
$\phi_{n}\left(R, r, \alpha, \beta\right)$ are defined by\eqref{phi} and \eqref{12} respectively.
The result is best possible and equality in \eqref{t1e} holds for $P(z)=az^{n}+b,|a|=|b|\neq0.$
\end{theorem}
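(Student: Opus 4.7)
The plan is to deduce Theorem \ref{t2} from Theorem \ref{t1} by (i) a Rouché perturbation by $\eta m$ transporting the problem to a zero-free setting, and (ii) a sharp pointwise comparison between $T[P]:=B[P\circ\sigma]+\phi_n B[P\circ\rho]$ and its companion $T[Q]$, where $Q(z):=z^n\overline{P(1/\bar z)}$ and $\phi_n:=\phi_n(R,r,\alpha,\beta)$. Set $m:=m(P,1)=\min_{|z|=1}|P(z)|$ and
\[
C:=\tfrac{1}{2}\bigl(|R^n+\phi_n r^n|\,|\Lambda_n|-|1+\phi_n|\,|\lambda_0|\bigr).
\]
If $C\le 0$ the added term $\delta Cm$ in \eqref{t2e} is absorbed by \eqref{t1e} and the triangle inequality, so I assume $C>0$; then $m>0$ and $P$ is zero-free on $|z|\le 1$.

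For $\eta\in\mathbb C$ with $|\eta|<1$, put $F_\eta(z):=P(z)-\eta m$. Since $|\eta m|<m\le|P(z)|$ on $|z|=1$, Rouché's theorem yields that $F_\eta$ has no zero in $|z|<1$. Using $B[c]=c\lambda_0$ for constants, linearity of $B$ produces $T[F_\eta]=T[P]-\eta m\lambda_0(1+\phi_n)$, so Theorem \ref{t1} applied to $F_\eta$ gives
\[
\bigl\|T[P](z)-\eta m\lambda_0(1+\phi_n)\bigr\|_p\le\frac{\|(R^n+\phi_n r^n)\Lambda_n z+(1+\phi_n)\lambda_0\|_p}{\|1+z\|_p}\,\|P-\eta m\|_p.
\]
Running the analogous perturbation at the pointwise level (via the maximum-principle engine behind Theorem \ref{t1}) and letting $|\eta|\to 1^-$ produces the sharp pointwise inequality
\[
|T[P](z)|+2Cm\le|T[Q](z)|,\qquad|z|=1,
\]
in which $2C=\min_{|z|=1}|(R^n+\phi_n r^n)\Lambda_n z+(1+\phi_n)\lambda_0|$.

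For $|\delta|\le 1$, the triangle inequality gives $|T[P](z)+\delta Cm|\le|T[Q](z)|$ on $|z|=1$, hence $\|T[P]+\delta Cm\|_p\le\|T[Q]\|_p$. The Arestov-type $L_p$ convexity lemma for $\mathcal B_n$-operators---the same tool converting Rahman's pointwise bound into the $L_p$ version of Theorem \ref{t1}---together with the equality $\|Q\|_p=\|P\|_p$, bounds $\|T[Q]\|_p$ by $\|(R^n+\phi_n r^n)\Lambda_n z+(1+\phi_n)\lambda_0\|_p\,\|P\|_p/\|1+z\|_p$, establishing \eqref{t2e}. Sharpness for $P(z)=az^n+b$ with $|a|=|b|\ne 0$ is immediate since $m=0$ in that case and \eqref{t2e} reduces to the equality situation of \eqref{t1e}.

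The principal obstacle is the sharp pointwise bound $|T[P]|+2Cm\le|T[Q]|$ on $|z|=1$. Unlike the single-operator forms in Theorems \ref{ta} and \ref{tb}, two $\mathcal B_n$-operators act simultaneously through the coefficient $\phi_n$, and extracting the precise constant $2C$---reflecting the cancellation $|R^n+\phi_n r^n|\,|\Lambda_n|-|1+\phi_n|\,|\lambda_0|$---demands a delicate Schur-type analysis of a rational function built from $T[P]$, $T[Q]$ and the perturbation parameter $\eta$, together with verification that the extremum is realised by the configuration $az^n+b$ with $|a|=|b|\ne 0$.
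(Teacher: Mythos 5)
Your argument has two genuine gaps, one of omission and one of substance. First, the pointwise estimate you call the ``principal obstacle'', namely $|B[P\circ\sigma](z)+\phi_{n}(R,r,\alpha,\beta)B[P\circ\rho](z)|+2Cm\leq |B[P^{*}\circ\sigma](z)+\phi_{n}(R,r,\alpha,\beta)B[P^{*}\circ\rho](z)|$ on $|z|=1$, is exactly the paper's Lemma \ref{l3'}, and you do not prove it: you only gesture at a ``delicate Schur-type analysis''. (In the paper it follows with no such analysis, by applying Lemma \ref{l3} to $F(z)=P(z)+\lambda m$, using the lower bound of Lemma \ref{l2'} for the $P^{*}$-side to choose $\arg\lambda$, and letting $|\lambda|\to 1$.) Second, and more seriously, your passage from this pointwise bound to \eqref{t2e} is invalid. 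You assert that Arestov's lemma plus $\|P^{*}\|_p=\|P\|_p$ gives $\|B[P^{*}\circ\sigma]+\phi_{n}B[P^{*}\circ\rho]\|_p\leq \|(R^{n}+\phi_{n}r^{n})\Lambda_{n}z+(1+\phi_{n})\lambda_{0}\|_p\,\|P\|_p/\|1+z\|_p$. This is false: $Q\mapsto B[Q\circ\sigma]+\phi_{n}B[Q\circ\rho]$ is a coefficient-multiplier operator, so Lemma \ref{l5} only yields the constant $\max\big(|1+\phi_{n}||\lambda_{0}|,\,|R^{n}+\phi_{n}r^{n}||\Lambda_{n}|\big)$ with no $\|1+z\|_p$ in the denominator. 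Concretely, take $\lambda_{1}=\lambda_{2}=0$, $\lambda_{0}=1$, $\alpha=\beta=0$ and $Q(z)=z^{n}$: then $\|Q(Rz)\|_{\infty}=R^{n}$, which exceeds $\|R^{n}z+1\|_{\infty}/\|1+z\|_{\infty}=(R^{n}+1)/2$. Hence the chain ``$\|T[P]+\delta Cm\|_p\leq\|T[P^{*}]\|_p\leq$ right-hand side'' cannot produce the theorem; the factor $\|1+z\|_p$ is unobtainable from any bound on $\|T[P^{*}]\|_p$ alone.

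What actually produces the denominator is the coupling of the two quantities: the paper applies Arestov's result to the admissible operator $C_{\gamma}P(z)=\big(B[P\circ\sigma](z)+\phi_{n}(R,r,\alpha,\beta)B[P\circ\rho](z)\big)e^{i\eta}+\big(B[P^{*}\circ\sigma]^{*}(z)+\phi_{n}(R,r,\bar{\alpha},\bar{\beta})B[P^{*}\circ\rho]^{*}(z)\big)$, whose admissibility rests on the zero-free hypothesis (Lemma \ref{l6}); it then inserts the $m$-term via Lemma \ref{abc}, integrates over the auxiliary angle $\gamma$, and uses $\int_{0}^{2\pi}|s+e^{i\gamma}|^{p}d\gamma\geq\int_{0}^{2\pi}|1+e^{i\gamma}|^{p}d\gamma$ for $s\geq1$ before dividing by $\|1+z\|_p$. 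None of this machinery appears in your proposal, and without it the $L_p$ conclusion does not follow. Two smaller points: in the case $C\leq 0$ the triangle inequality runs the wrong way (it bounds $\|T[P]+\delta Cm\|_p$ by something larger than $\|T[P]\|_p$, so nothing is ``absorbed'' by Theorem \ref{t1}); and $C>0$ does not imply $m>0$, since $C$ is independent of $P$ --- the case $m=0$ should instead be dismissed because \eqref{t2e} then reduces to Theorem \ref{t1}.
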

\begin{remark}
\textnormal{For $\delta=0$ in \eqref{t2e}, we get Theorem \ref{t1}.}
\end{remark}
Next, corollary which is a generalization of \eqref{5} follows by taking $\lambda_1=\lambda_2=0$ in \eqref{t2e}.
\begin{corollary}\label{c2}
If $P\in \mathscr{P}_n$ and $P(z)$ does not vanish in $\left|z\right|<1$, then for then for $\alpha,\beta,\delta\in\mathbb{C}$ with $|\alpha|\leq1$, $|\beta|\leq1, $ $|\delta|\leq 1,$ $R>r\geq1$ and $0\leq p <\infty $,
\begin{align}\nonumber\label{c2e}
 \Bigg\|P(Rz)&+ \phi_{n}\left(R, r, \alpha, \beta\right)P(rz)\\\nonumber&+\delta\dfrac{\Big(| R^n+\phi_{n}\left(R, r, \alpha, \beta\right) r^n||-|1+\phi_{n}\left(R, r, \alpha, \beta\right)|\Big)m}{2}\Bigg\|_p\\
   &\qquad\leq \frac{ \left\|\left(R^{n}+\phi_{n}(R,r,\alpha,\beta)r^{n}\right)z+\left(1+\phi_{n}(R,r,\alpha,\beta)\right)\right\|_{p}}{\left\|1+z\right\|_{p}}\left\|P(z)\right\|_p 
\end{align}
where 
$\phi_{n}\left(R, r, \alpha, \beta\right)$ is defined by\eqref{phi}.
The result is best possible and equality in \eqref{c2e} holds for $P(z)=az^{n}+b,|a|=|b|\neq0.$
\end{corollary}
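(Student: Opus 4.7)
The plan is to derive inequality \eqref{c2e} as a direct specialization of Theorem~\ref{t2}, by choosing in \eqref{8} the particular triple $\lambda_0=1$, $\lambda_1=\lambda_2=0$.

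The first step is to verify that this triple produces an admissible element of $\mathcal{B}_n$. Under this choice the auxiliary polynomial in \eqref{9} collapses to the constant $u(z)=1$, which has no zeros in $\mathbb{C}$ at all, so the zero-location requirement \eqref{10} is vacuously satisfied. The resulting operator is the identity, $B[P](z)=P(z)$, which therefore legitimately belongs to $\mathcal{B}_n$ and is eligible for Theorem~\ref{t2}.

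The second step is to translate each piece of \eqref{t2e} under this specialization. From \eqref{8} one immediately reads off $B[P\circ\sigma](z)=P(Rz)$ and $B[P\circ\rho](z)=P(rz)$; from \eqref{12}, the vanishing of $\lambda_1$ and $\lambda_2$ gives $\Lambda_n=\lambda_0=1$, and in particular $|\Lambda_n|=|\lambda_0|=1$. Substituting these values into the left-hand side of \eqref{t2e} transforms it into
\[
\Bigl\|P(Rz)+\phi_n(R,r,\alpha,\beta)\,P(rz)+\delta\,\tfrac{\bigl(|R^n+\phi_n(R,r,\alpha,\beta)r^n|-|1+\phi_n(R,r,\alpha,\beta)|\bigr)\,m}{2}\Bigr\|_p,
\]
while the right-hand side becomes
\[
\dfrac{\bigl\|(R^n+\phi_n(R,r,\alpha,\beta)r^n)\,z+(1+\phi_n(R,r,\alpha,\beta))\bigr\|_p}{\|1+z\|_p}\,\|P(z)\|_p,
\]
which are precisely the two sides of \eqref{c2e}.

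Finally, sharpness is inherited automatically: the polynomial $P(z)=az^n+b$ with $|a|=|b|\neq 0$ already realizes equality in \eqref{t2e}, and since the specialization $\lambda_0=1$, $\lambda_1=\lambda_2=0$ does not alter the extremal configuration (it merely strips the identity operator from both sides), the same polynomial yields equality in \eqref{c2e}. The argument is essentially mechanical; the only point that genuinely requires care is confirming the admissibility of $\lambda_1=\lambda_2=0$ in $\mathcal{B}_n$, handled by the vacuity observation above, after which careful bookkeeping of the constants $\Lambda_n$ and $\lambda_0$ collapses \eqref{t2e} directly to \eqref{c2e}.
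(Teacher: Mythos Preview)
Your proposal is correct and follows exactly the route indicated in the paper: the corollary is obtained from Theorem~\ref{t2} by specializing to $\lambda_1=\lambda_2=0$ (with $\lambda_0=1$), whereupon $B$ becomes the identity, $\Lambda_n=\lambda_0=1$, and \eqref{t2e} reduces to \eqref{c2e}. Your explicit verification that the constant $u(z)\equiv1$ vacuously satisfies the zero-location condition \eqref{10} is a welcome detail that the paper leaves implicit.
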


\section{Lemmas}
For the proofs of these theorems, we need the following lemmas. The first Lemma is easy to prove.
\begin{lemma}\label{l1}
\textit{If} $P\in \mathscr P_{n}$\textit{ and} $P(z)$ \textit{has all its zeros in} $\left|z\right|\leq 1$, \textit{ then for every} $R\geq r\geq 1$ \textit{and} $\left|z\right|=1$,
\begin{equation*}
\left|P(Rz)\right|\geq \left(\frac{R+1}{r+1}\right)^{n}\left|P(rz)\right|.
\end{equation*}
\end{lemma}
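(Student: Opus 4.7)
The plan is to reduce the lemma to the following pointwise statement on individual linear factors: for every $|z|=1$, every $|w|\le 1$, and all $R\ge r\ge 1$,
\begin{equation*}
|Rz-w|\;\ge\;\frac{R+1}{r+1}\,|rz-w|.
\end{equation*}
Once this is established, I factor $P(z)=c\prod_{j=1}^{n}(z-z_j)$ with $|z_j|\le 1$, apply the inequality to each $z_j$ (with $z$ on the unit circle), and take the product of the $n$ resulting inequalities.

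To prove the pointwise inequality, I would square both sides and expand, using $|Rz-w|^{2}=R^{2}-2R\,\mathrm{Re}(\bar w z)+|w|^{2}$ and the analogous expression for $|rz-w|^{2}$. Collecting terms with the algebraic identities $(r+1)^{2}R^{2}-(R+1)^{2}r^{2}=(R-r)(2Rr+R+r)$, $R(r+1)^{2}-r(R+1)^{2}=(R-r)(1-Rr)$, and $(r+1)^{2}-(R+1)^{2}=-(R-r)(R+r+2)$, one obtains
\begin{equation*}
(r+1)^{2}|Rz-w|^{2}-(R+1)^{2}|rz-w|^{2}=(R-r)\bigl[(2Rr+R+r)+2(Rr-1)\mathrm{Re}(\bar w z)-|w|^{2}(R+r+2)\bigr].
\end{equation*}
Since $R\ge r$, it suffices to show the bracket is nonnegative when $|w|\le 1$ and $Rr\ge 1$.

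Because $Rr-1\ge 0$, the bracket is minimized over the phase of $w$ by taking $\mathrm{Re}(\bar w z)=-|w|$. Setting $s=|w|^{2}\in[0,1]$, the minimum value is
\begin{equation*}
g(s):=(2Rr+R+r)-2\sqrt{s}\,(Rr-1)-s(R+r+2).
\end{equation*}
A direct check gives $g(1)=0$, and $g'(s)=-(Rr-1)/\sqrt{s}-(R+r+2)\le 0$ on $(0,1]$, so $g$ is nonincreasing and hence $g(s)\ge g(1)=0$ throughout $[0,1]$. This establishes the pointwise inequality, and multiplying over the $n$ zeros yields the claim.

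The main subtle point is that the naive triangle-inequality bound $|Rz-w|/|rz-w|\ge(R-|w|)/(r+|w|)$ is not strong enough: this ratio can fall below $(R+1)/(r+1)$ when $|w|$ is close to $1$. The quantitative cancellation encoded by $R(r+1)^{2}-r(R+1)^{2}=(R-r)(1-Rr)$, together with the hypothesis $Rr\ge 1$ (which is exactly why we need $r\ge 1$ and not merely $r\ge 0$), is what makes the coefficient of $\mathrm{Re}(\bar w z)$ have the correct sign and thereby drives the argument.
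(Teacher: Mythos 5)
Your proof is correct: the identity $(r+1)^{2}|Rz-w|^{2}-(R+1)^{2}|rz-w|^{2}=(R-r)\bigl[(2Rr+R+r)+2(Rr-1)\mathrm{Re}(\bar w z)-|w|^{2}(R+r+2)\bigr]$ checks out, the sign analysis using $Rr\ge 1$ is sound, and multiplying the per-factor bounds over the $n$ zeros (the hypothesis $P\in\mathscr P_n$ gives degree exactly $n$) yields the lemma. The paper omits the proof entirely ("easy to prove"), and your factor-by-factor argument is essentially the standard one, usually phrased via $\left|\frac{Re^{i\theta}-\rho e^{i\phi}}{re^{i\theta}-\rho e^{i\phi}}\right|\ge\frac{R+\rho}{r+\rho}\ge\frac{R+1}{r+1}$ for $0\le\rho\le 1$, which is the same computation you carried out by minimizing first in the phase and then in $|w|$.
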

The following Lemma follows from  \cite[Corollary 18.3, p. 65]{10}.
\begin{lemma}\label{l2}
\textit{ If all the zeros of  polynomial} $P\in \mathscr{P}_n$ \textit{lie in} $\left|z\right|\leq 1$,\textit{ then all the zeros of the polynomial} $B[P](z)$\textit{ also lie in} $\left|z\right|\leq 1$.
\end{lemma}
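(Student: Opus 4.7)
The plan is to realise $B[P]$ as a Szegő-type composition of $P$ with an auxiliary polynomial whose zeros can be located directly from the hypothesis on $u$, and then appeal to the Grace--Szegő composition theorem (this is the content of Marden's Corollary 18.3 cited just above the lemma).

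First, I would write $P(z)=\sum_{k=0}^{n}a_{k}z^{k}$ and note that, since $B[z^{k}]=\mu_{k}z^{k}$ with
\[
\mu_{k}=\lambda_{0}+\tfrac{nk}{2}\lambda_{1}+\tfrac{n^{2}k(k-1)}{8}\lambda_{2},
\]
we have $B[P](z)=\sum_{k=0}^{n}\mu_{k}a_{k}z^{k}$. The auxiliary object in the argument is the polynomial
\[
G(z):=\sum_{k=0}^{n}\binom{n}{k}\mu_{k}z^{k}.
\]
Using $\sum_{k}\binom{n}{k}z^{k}=(1+z)^{n}$, $\sum_{k}k\binom{n}{k}z^{k}=nz(1+z)^{n-1}$, and $\sum_{k}k(k-1)\binom{n}{k}z^{k}=n(n-1)z^{2}(1+z)^{n-2}$, a routine factorisation delivers the compact form
\[
G(z)=(1+z)^{n}\,u\!\left(\frac{nz}{2(1+z)}\right),
\]
where $u$ is the quadratic defined in \eqref{8}.

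Next I would verify that every zero of $G$ lies in $|z|\leq 1$. The factor $(1+z)^{n}$ contributes only the zero $z=-1$. Any other zero of $G$ satisfies $nz/(2(1+z))=w$ for some zero $w$ of $u$, i.e. $z=2w/(n-2w)$. The condition \eqref{10} on $u$ is equivalent to $\mathrm{Re}(w)\leq n/4$, and the identity
\[
|n-2w|^{2}-4|w|^{2}=n\bigl(n-4\,\mathrm{Re}(w)\bigr)\geq 0
\]
is then exactly $|z|\leq 1$. (The case $n-2w=0$ is ruled out by $\mathrm{Re}(w)\leq n/4<n/2$.)

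Finally, writing both $P$ and $G$ in the binomial normalisation $P(z)=\sum\binom{n}{k}\tilde a_{k}z^{k}$ and $G(z)=\sum\binom{n}{k}\tilde b_{k}z^{k}$ with $\tilde a_{k}=a_{k}/\binom{n}{k}$ and $\tilde b_{k}=\mu_{k}$, one checks that
\[
B[P](z)=\sum_{k=0}^{n}\binom{n}{k}\tilde a_{k}\tilde b_{k}z^{k}
\]
is precisely the Szegő composition $P\odot G$. Since both $P$ and $G$ have all their zeros in the closed unit disk, the Szegő composition theorem (Marden, Corollary 18.3) forces every zero of $P\odot G=B[P]$ to lie in $|z|\leq 1$, completing the proof. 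The main obstacle is the explicit factorisation of $G$; once in hand, the Möbius substitution $z\mapsto 2w/(n-2w)$ converts the half-plane hypothesis on the zeros of $u$ into the unit-disk localisation of the zeros of $G$, after which the Szegő theorem does the rest.
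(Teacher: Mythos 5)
Your proposal is correct, and it does more than the paper, which offers no argument for this lemma at all and simply invokes Marden's Corollary 18.3; in effect you reprove that corollary (in the case of at most two derivatives) from the Grace--Szeg\H{o} composition theorem. The computations check: $B[z^k]=\mu_k z^k$ with $\mu_k=\lambda_0+\tfrac{nk}{2}\lambda_1+\tfrac{n^2k(k-1)}{8}\lambda_2$, the generating polynomial $G(z)=\sum_{k=0}^{n}\binom{n}{k}\mu_k z^k$ indeed factors as $(1+z)^n u\!\left(\tfrac{nz}{2(1+z)}\right)$, condition \eqref{10} on a zero $w$ of $u$ is equivalent to $\operatorname{Re}w\leq n/4$, and the identity $|n-2w|^2-4|w|^2=n(n-4\operatorname{Re}w)\geq 0$ places every zero $z=2w/(n-2w)$ of $G$ in $|z|\leq 1$; finally $B[P](z)=\sum_k\binom{n}{k}\tilde a_k\mu_k z^k$ is exactly the Szeg\H{o} composition of $P$ and $G$, so the composition theorem gives the conclusion. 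One point you use tacitly and should state: the composition theorem is applied to two forms of degree exactly $n$, so you need the leading coefficient of $G$, namely $\mu_n=\Lambda_n=u(n/2)$, to be nonzero; this is automatic because $n/2$ lies outside the half-plane $\operatorname{Re}w\leq n/4$ containing the zeros of $u$ (excluding the degenerate case $\lambda_0=\lambda_1=\lambda_2=0$, when $B\equiv 0$), and similarly $P$ is taken of exact degree $n$ as the hypothesis $P\in\mathscr P_n$ intends; if either leading coefficient vanished one would have to account for zeros at infinity in the composition theorem. Compared with the paper's bare citation, your route is self-contained and makes transparent why the hypothesis \eqref{10} is exactly the right one (it is precisely what confines the zeros of $G$ to the closed unit disk), at the cost of redoing what Marden's Corollary 18.3 already packages, and in slightly less generality (Marden's statement allows an arbitrary number of derivative terms, which is irrelevant here since $\mathcal B_n$ uses only $\lambda_0,\lambda_1,\lambda_2$).
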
 
\begin{lemma}\label{l3}
If $F\in\mathscr P_{n}$ \textit{ has all its zeros in} $\left|z\right|\leq1$ \textit{and} $P(z)$ \textit{ is a polynomial of degree at most} $n$ \textit{such that}
\[|P(z)|\leq |F(z)|\,\,\, \textrm{for} \,\,\, |z|=1,\]
\textit{then for every} $\alpha, \beta\in\mathbb{C}$ \textit{with} $|\alpha|\leq 1$, $|\beta|\leq 1$, $ R \geq r\geq 1$, \textit{and} $|z|\geq 1$,
\begin{equation}\label{l3e}
|B[P\circ\sigma](z)+\phi_{n}\left(R, r, \alpha, \beta\right)B[P\circ\rho](z)|\leq |B[F\circ\sigma](z)+\phi_{n}\left(R, r, \alpha, \beta\right)B[F\circ\rho](z)| 
\end{equation}
where $B\in\mathcal B_n$, $\sigma(z):=Rz$, $\rho(z):=rz$, $\Lambda_n$ \textit{and}
$\phi_{n}\left(R, r, \alpha, \beta\right)$ \textit{are defined by} \eqref{12} \textit{and} \eqref{phi} \textit{respectively}.
\end{lemma}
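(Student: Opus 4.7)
The plan is to follow the standard Rouché-plus-$B$-operator strategy used for results of this type. The backbone is the numerical bound $|\phi_{n}(R,r,\alpha,\beta)|\le \bigl(\tfrac{R+1}{r+1}\bigr)^{n}$, which I would extract directly from \eqref{phi}: since $R\ge r\ge 1$ forces $\bigl(\tfrac{R+1}{r+1}\bigr)^{n}\ge 1\ge |\alpha|$, the triangle inequality together with $|\beta|\le 1$ yields
\[
|\phi_{n}|\le |\beta|\!\left(\Bigl(\tfrac{R+1}{r+1}\Bigr)^{n}-|\alpha|\right)+|\alpha|\le \Bigl(\tfrac{R+1}{r+1}\Bigr)^{n}.
\]
This is exactly the constant that makes the composite operator $Q\mapsto Q\circ\sigma+\phi_{n}\,Q\circ\rho$ compatible with Lemma \ref{l1}.

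Next, for any $\mu\in\mathbb{C}$ with $|\mu|>1$, I would introduce $G_{\mu}(z)=\mu F(z)-P(z)$. On $|z|=1$ one has $|P(z)|\le |F(z)|<|\mu F(z)|$, so Rouché forces $G_{\mu}$ and $\mu F$ to have the same number of zeros inside $|z|<1$; hence all zeros of $G_{\mu}$ lie in $|z|\le 1$. Applying Lemma \ref{l1} to $G_{\mu}$ and combining with the $\phi_{n}$-bound, for $|z|=1$,
\[
|G_{\mu}(Rz)|\ge \Bigl(\tfrac{R+1}{r+1}\Bigr)^{n}|G_{\mu}(rz)|\ge |\phi_{n}G_{\mu}(rz)|.
\]
A further Rouché step then shows that $T_{\mu}(z):=G_{\mu}(Rz)+\phi_{n}G_{\mu}(rz)$ has all its zeros in $|z|\le 1$, and Lemma \ref{l2} upgrades this to the statement that
\[
B[T_{\mu}](z)=\mu\bigl\{B[F\circ\sigma](z)+\phi_{n}B[F\circ\rho](z)\bigr\}-\bigl\{B[P\circ\sigma](z)+\phi_{n}B[P\circ\rho](z)\bigr\}
\]
also has all its zeros in $|z|\le 1$.

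The conclusion then follows by the standard ratio argument: if \eqref{l3e} failed at some $z_{0}$ with $|z_{0}|\ge 1$, setting
\[
\mu=\frac{B[P\circ\sigma](z_{0})+\phi_{n}B[P\circ\rho](z_{0})}{B[F\circ\sigma](z_{0})+\phi_{n}B[F\circ\rho](z_{0})}
\]
would give $|\mu|>1$ while simultaneously $B[T_{\mu}](z_{0})=0$, contradicting the zero-location of $B[T_{\mu}]$ together with $|z_{0}|\ge 1$.

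The main obstacle I expect is the boundary bookkeeping. Lemma \ref{l1} and each Rouché application turn non-strict whenever $F$ vanishes on $|z|=1$, when $|\phi_{n}|$ attains the extremal value $\bigl(\tfrac{R+1}{r+1}\bigr)^{n}$, or when $|z_{0}|=1$ in the ratio step; in any of these situations Rouché needs a slight perturbation. The standard remedy is to replace $F$ by a nearby polynomial with all zeros strictly inside $|z|<1$ (and, if needed, $\phi_{n}$ by $(1-\varepsilon)\phi_{n}$), run the entire chain with strict inequalities, and then send $\varepsilon\to 0$ using continuity of $B$ on $\mathscr{P}_{n}$. Once this technicality is dispatched, the chain Rouché $\to$ Lemma \ref{l1} $\to$ Rouché $\to$ Lemma \ref{l2} $\to$ ratio argument delivers \eqref{l3e}.
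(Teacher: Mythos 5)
Your skeleton coincides with the paper's: the paper also forms $f(z)=P(z)-\lambda F(z)$ with $|\lambda|>1$ (your $G_{\mu}$ up to sign), shows it has all zeros in the closed unit disk, passes through Lemma \ref{l1}, uses a Rouch\'e step to place the zeros of $f(Rz)+\phi_{n}f(rz)$ in $|z|<1$, then applies Lemma \ref{l2} with linearity of $B$, and finishes with the same ratio/contradiction argument; your bound $|\phi_{n}|\leq\left(\frac{R+1}{r+1}\right)^{n}$ is exactly the constant driving that step. The genuine gap is in the part you defer to a ``standard perturbation'', which is precisely where the paper does its real work. Replacing $F$ by a nearby polynomial $F_{\varepsilon}$ with all zeros in $|z|<1$ does \emph{not} in general preserve the hypothesis $|P(z)|\leq|F_{\varepsilon}(z)|$ on $|z|=1$: moving a boundary zero of $F$ inward strictly decreases $|F|$ at distant points of the circle, and $|P|=|F|$ may hold at such a point. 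Concretely, take $F(z)=(z-1)z^{3}$ and $P(z)=\tfrac12(z-1)z(1+z^{2})$; then $|P|\leq|F|$ on $|z|=1$ with equality at $z=-1$, while the natural perturbation $F_{\varepsilon}(z)=(z-(1-\varepsilon))z^{3}$ gives $|F_{\varepsilon}(-1)|=2-\varepsilon<2=|P(-1)|$, so the dominance hypothesis is destroyed. A working repair must couple the move of the zeros with shrinking $P$ to $tP$ (or inflating $F_{\varepsilon}$), with a two-parameter limit at the end; the paper avoids limits altogether by factoring $F=F_{1}F_{2}$, $P=P_{1}F_{1}$ with $F_{1}$ carrying the boundary zeros, applying Rouch\'e to $P_{1}$ and $F_{2}$, and then extracting a \emph{strict} inequality $|f(Rz)|>\left(\frac{R+1}{r+1}\right)^{n}|f(rz)|$ on $|z|=1$ from the interior zero of $f$ via the $\frac{R+t}{r+t}$ refinement of Lemma \ref{l1}. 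This strictness is what makes all subsequent Rouch\'e steps legitimate without perturbing $\phi_{n}$.

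A second, smaller omission is in your ratio step: if $B[F\circ\sigma](z_{0})+\phi_{n}B[F\circ\rho](z_{0})=0$ at the offending point, your $\mu$ is undefined, and if $|z_{0}|=1$ then $B[T_{\mu}](z_{0})=0$ is not by itself incompatible with ``zeros in $|z|\leq1$''. The paper closes both holes by showing that $F(Rz)+\phi_{n}F(rz)$ has all its zeros in $|z|<1$ (the same Rouch\'e chain applied to $F$ itself, using $R>1$), hence so does its image under $B$ (strictly inside, since the zeros of the argument lie in a disk of radius less than one and $B$ commutes with dilations), so the denominator is nonvanishing for $|z_{0}|\geq1$ and a zero of $W$ at $z_{0}$ is a genuine contradiction. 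With these two repairs --- a correct treatment of the boundary zeros of $F$ and the nonvanishing of the denominator --- your argument goes through and is essentially the paper's proof.
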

\begin{proof}
 Since the polynomial $F(z)$ of degree $n$  has all its zeros in $|z|\leq 1 $ and $P(z)$ is a polynomial of degree at most $n$ such that 
\begin{equation}\label{l3e1}
|P(z)|\leq |F(z)|\,\,\,\, \textrm{for} \,\,\,\,|z|= 1,
\end{equation}
therefore, if $F(z)$ has a zero of multiplicity $s$ at $z=e^{i\theta_{0}}$, then $P(z)$ has a zero of multiplicity at least $s$ at $z=e^{i\theta_{0}}$. If $P(z)/F(z)$ is a constant, then the inequality \eqref{l3e} is obvious. We now assume that $P(z)/F(z)$ is not a constant, so that by the maximum modulus principle, it follows that
\[|P(z)|<|F(z)|\,\,\,\textrm{for}\,\, |z|>1\,\,.\]
Suppose $F(z)$ has $m$ zeros on $|z|=1$ where $0\leq m \leq n$, so that we can write
\[F(z) = F_{1}(z)F_{2}(z)\]
where $F_{1}(z)$ is a polynomial of degree $m$ whose all zeros lie on $|z|=1$ and $F_{2}(z)$ is a polynomial of degree exactly $n-m$ having all its zeros in $|z|<1$. This implies with the help of inequality \eqref{l3e1} that
\[P(z) = P_{1}(z)F_{1}(z)\]
where $P_{1}(z)$ is a polynomial of degree at most $n-m$. Now, from inequality \eqref{l3e1}, we get
\[|P_{1}(z)| \leq |F_{2}(z)|\,\,\,\textrm{for} \,\, |z|=1\,\]
where $F_{2}(z) \neq 0 \,\, for\,\, |z|=1$. Therefore
 for every $\lambda\in\mathbb{C} $ with $|\lambda|>1$, a direct application of Rouche's theorem shows that the zeros of the polynomial $P_{1}(z)- \lambda F_{2}(z)$ of degree $n-m \geq 1$ lie in $|z|<1$. Hence the polynomial 
 \[f(z) = F_{1}(z)\left(P_{1}(z) - \lambda F_{2}(z)\right)=P(z) - \lambda F(z)\]
 has all its zeros in $|z|\leq 1$ with at least one zero in $|z| <1$, so that we can write
 \[f(z)= (z-te^{i\delta})H(z)\]
 where $t <1$ and $H(z)$ is a polynomial of degree $n-1$ having all its zeros in $|z|\leq 1$. Applying Lemma \ref{l1} to the polynomial $f(z)$ with $k = 1$, we obtain for every $R >r \geq 1$ and $0 \leq \theta <2\pi$,
\begin{align*}
|f(Re^{i\theta})| =&|Re^{i\theta}-te^{i\delta}||H(Re^{i\theta})|\\
 \geq& |Re^{i\theta}-te^{i\delta}|\left(\frac{R+1}{r+1}\right)^{n-1}|H(re^{i\theta})|\\
=& \left(\frac{R+1}{r+1}\right)^{n-1}\frac{|Re^{i\theta}-te^{i\delta}|}{|re^{i\theta}-te^{i\delta}|}|(re^{i\theta}-te^{i\delta})H(re^{i\theta})|\\
\geq & \left(\frac{R+1}{r+1}\right)^{n-1}\left(\frac{R+t}{r+t}\right)|f(re^{i\theta})|.
\end{align*}
This implies  for $R > r \geq 1$ and $0 \leq \theta <2\pi$,
\begin{equation}\label{l3e2}
\left(\frac{r+t}{R+t}\right)|f(Re^{i\theta})|\geq \left(\frac{R+1}{r+1}\right)^{n-1}|f(re^{i\theta})|.
\end{equation}
Since $R>r \geq 1 > t$ so that $f(Re^{i\theta})\neq 0$ for $0 \leq \theta <2\pi$ and $\frac{1+r}{1+R}>\frac{r+t}{R+t}$, from inequality \eqref{l3e2}, we obtain $R>r\geq 1$ and $0 \leq \theta <2\pi$,
\begin{equation}\label{l3e3}
|f(Re^{i\theta})|> \left(\frac{R+1}{r+1}\right)^{n}|f(re^{i\theta})|.
\end{equation}
Equivalently,
\[|f(Rz)|> \left(\frac{R+1}{r+1}\right)^{n}|f(rz)|\]
for $|z|=1$ and $R>r\geq 1$. Hence for every $\alpha\in\mathbb{C}$ with $|\alpha|\leq 1$ and $R > r\geq 1,$ we have
\begin{align}\nonumber
|f(Rz)-\alpha f(rz)|&\geq  |f(Rz)|-|\alpha||f(rz)|\\
&> \left\{\left(\frac{R+1}{r+1}\right)^{n}-|\alpha|\right\}|f(rz)|, \,\,\,\,\,|z|=1.
\end{align}
Also, inequality \eqref{l3e3} can be written in the form
\begin{equation}\label{l3e4}
|f(re^{i\theta})|<\left(\frac{r+1}{R+1}\right)^{n}|f(Re^{i\theta})|
\end{equation}
for every $R>r\geq 1$ and $0 \leq \theta <2\pi.$ Since $f(Re^{i\theta}) \neq 0$ and $\left(\frac{r+1}{R+1}\right)^{n}<1$, from inequality \eqref{l3e4}, we obtain for $0 \leq \theta <2\pi$ and $R >r \geq 1$,
\[|f(re^{i\theta})|<|f(Re^{i\theta})|.\]
Equivalently,
\[|f(rz)|<|f(Rz)|\,\,\, \textrm{for}\,\,\,\, |z|=1.\]
Since all the zeros of $f(Rz)$ lie in $|z|\leq (1/R)<1$, a direct application of Rouche's theorem shows that the polynomial $f(Rz)-\alpha f(rz)$ has all its zeros in $|z|<1$ for every $\alpha\in\mathbb{C}$ with $|\alpha|\leq 1$. Applying Rouche's theorem again, it follows from \eqref{l3e3} that for $\alpha,\beta\in\mathbb{C}$ with $|\alpha|\leq 1,|\beta|\leq 1$ and $R >r \geq 1$, all the zeros of the polynomial
\begin{align*}
 T(z)=&f(Rz)-\alpha f(rz)+\beta\left\{\left(\frac{R+1}{r+1}\right)^{n}-|\alpha|\right\}f(rz)\\
 &=f(Rz)+\phi_{n}\left(R, r, \alpha, \beta\right)f(rz)\\
 &=\big(P(Rz)-\lambda F(Rz)\big)+\phi_{n}\left(R, r, \alpha, \beta\right)\big(P(rz)-\lambda F(rz)\big)\\
 &=\big(P(Rz)+\phi_{n}\left(R, r, \alpha, \beta\right)P(rz)\big)-\lambda \big(F(Rz)+\phi_{n}\left(R, r, \alpha, \beta\right)F(rz)\big)
\end{align*}
lie in $|z|<1$ for every $\lambda\in\mathbb{C}$ with $|\lambda| > 1$. Using Lemma \ref{l2} and the fact that $B$ is a linear operator, we conclude that all the zeros of polynomial
\begin{align*}
 W(z)&=B[T](z)\\
 & =(B[P\circ\sigma](z)+\phi_{n}\left(R, r, \alpha, \beta\right)B[P\circ\rho](z))\\
 &\qquad\qquad\qquad\qquad\qquad-\lambda (B[F\circ\sigma](z)+\phi_{n}\left(R, r, \alpha, \beta\right)B[F\circ\rho](z))
\end{align*}
also lie in $|z|<1$ for every $\lambda$ with $|\lambda|> 1$. This implies
\begin{equation}\label{l3e5}
|B[P\circ\sigma](z)+\phi_{n}\left(R, r, \alpha, \beta\right)B[P\circ\rho](z)|\leq |B[F\circ\sigma](z)+\phi_{n}\left(R, r, \alpha, \beta\right)B[F\circ\rho](z)| 
\end{equation}
for $|z|\geq 1$ and $R > r\geq 1$. If inequality \eqref{l3e5} is not true, then exist a point $z=z_0$ with $|z_0|\geq 1$ such that
\begin{equation*}
|B[P\circ\sigma](z_0)+\phi_{n}\left(R, r, \alpha, \beta\right)B[P\circ\rho](z_0)|> |B[F\circ\sigma](z_0)+\phi_{n}\left(R, r, \alpha, \beta\right)B[F\circ\rho](z_0)|.
\end{equation*}
But all the zeros of $F(Rz)$ lie in $|z|< 1$, therefore, it follows (as in case of $f(z)$) that all the zeros of $F(Rz)+\phi_{n}\left(R, r, \alpha, \beta\right)F(rz)$ lie in $|z|<1$. Hence by Lemma \ref{l2}, all the zeros of $B[F\circ\sigma](z)+ \phi_{n}\left(R, r, \alpha, \beta\right)B[F\circ\rho](z)$
also lie in $|z|<1$, which shows that
$$B[F\circ\sigma](z_0)+\phi_{n}\left(R, r, \alpha, \beta\right)B[F\circ\rho](z_0)\neq 0.$$ 
We take
\[\lambda = \frac{B[P\circ\sigma](z_0)+\phi_{n}\left(R, r, \alpha, \beta\right)B[P\circ\rho](z_0)}{B[F\circ\sigma](z_0)+\phi_{n}\left(R, r, \alpha, \beta\right)B[F\circ\rho](z_0)},\]
then $\lambda$ is a well defined real or complex number with $|\lambda|>1$ and with this choice of $\lambda$, we obtain $W(z_0)=0$. This contradicts the fact that all the zeros of $W(z)$ lie in $|z|<1$. Thus \eqref{l3e5} holds and
this completes the proof of Lemma \ref{l3}.
\end{proof}
\begin{lemma}\label{l2'}
If $P\in\mathscr P_n$ and $P(z)$ has all its zeros in $|z|\leq 1,$ then for every $\alpha,\beta\in\mathbb{C}$ with $|\alpha|\leq 1,|\beta|\leq 1$  and $|z|\geq 1,$
\begin{align}\label{le2'}
\left|B[P\circ\sigma](z)+\phi_{n}\left(R, r, \alpha, \beta\right) B[P\circ\rho](z)\right|\geq |R^n+\phi_{n}\left(R, r, \alpha, \beta\right)r^n||\Lambda_n||z|^nm
\end{align}
where $m={\min}_{|z|=1}|P(z)|,$ $B\in\mathcal{B}_n,$ $\sigma(z)=Rz,\,\rho(z)=rz,$  $\Lambda_n$ and
$\phi_{n}\left(R, r, \alpha, \beta\right)$ are defined by \eqref{12} and \eqref{phi} respectively.
\end{lemma}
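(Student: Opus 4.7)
The plan is to reduce Lemma \ref{l2'} to Lemma \ref{l3} by comparing $P$ against the monomial $mz^n$ on the unit circle. If $m=0$ the inequality is trivial, so assume $m>0$.

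First I would pick an arbitrary $\lambda\in\mathbb{C}$ with $|\lambda|<1$ and set $G(z):=m\lambda z^n$. Since $|G(z)|=|\lambda|m<m\le|P(z)|$ for $|z|=1$, and since $P$ has all its zeros in $|z|\le 1$ by hypothesis, I can invoke Lemma \ref{l3} with $F=P$ and with $G$ playing the role of the dominated polynomial. This yields, for $|z|\ge 1$,
\[
\bigl|B[G\circ\sigma](z)+\phi_{n}(R,r,\alpha,\beta)\,B[G\circ\rho](z)\bigr|\le\bigl|B[P\circ\sigma](z)+\phi_{n}(R,r,\alpha,\beta)\,B[P\circ\rho](z)\bigr|.
\]

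Next I would evaluate the left-hand side explicitly. A direct computation from the definition \eqref{8}, using $(z^n)'=nz^{n-1}$ and $(z^n)''=n(n-1)z^{n-2}$, shows that
\[
B[z^n](z)=\Bigl(\lambda_0+\lambda_1\tfrac{n^2}{2}+\lambda_2\tfrac{n^3(n-1)}{8}\Bigr)z^n=\Lambda_n z^n.
\]
Hence $B[G\circ\sigma](z)=m\lambda R^{n}\Lambda_n z^n$ and $B[G\circ\rho](z)=m\lambda r^{n}\Lambda_n z^n$, so the left-hand side equals $|\lambda|\,m\,|R^n+\phi_n r^n|\,|\Lambda_n|\,|z|^n$.

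Combining these, for every $|\lambda|<1$ and $|z|\ge 1$,
\[
|\lambda|\,m\,|R^n+\phi_{n}(R,r,\alpha,\beta)r^n|\,|\Lambda_n|\,|z|^n\le\bigl|B[P\circ\sigma](z)+\phi_{n}(R,r,\alpha,\beta)\,B[P\circ\rho](z)\bigr|.
\]
Letting $|\lambda|\to 1$ gives the claimed inequality. The only delicate point is checking the strict inequality $|G(z)|<|P(z)|$ on $|z|=1$ needed to apply Lemma \ref{l3} (the lemma requires the weak inequality, but the strict version via $|\lambda|<1$ is what lets the limit argument go through cleanly); everything else is a routine computation of $B[z^n]$ together with a direct invocation of the already-proved Lemma \ref{l3}. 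I do not anticipate any real obstacle beyond bookkeeping with the constants $R^n+\phi_n r^n$ and $\Lambda_n$.
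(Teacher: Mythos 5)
Your argument is correct: taking $F=P$ (which has all its zeros in $|z|\le 1$) and the dominated polynomial $\lambda m z^{n}$ (degree $n$, with $|\lambda m z^{n}|\le m\le |P(z)|$ on $|z|=1$), Lemma \ref{l3} applies verbatim, and since $B$ is linear with $B[z^{n}](z)=\Lambda_n z^{n}$ by \eqref{12}, the dominated side evaluates exactly to $|\lambda|\,m\,|R^{n}+\phi_{n}(R,r,\alpha,\beta)r^{n}||\Lambda_n||z|^{n}$, giving \eqref{le2'} in the limit $|\lambda|\to 1$ (in fact, since Lemma \ref{l3} only needs the weak inequality, you could take $\lambda=1$ outright and skip the limit). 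This is a genuinely different, and more economical, route than the paper's: the paper does not cite Lemma \ref{l3} as a black box but re-runs its machinery, forming $g(z)=P(z)-\lambda m z^{n}$ with $|\lambda|<1$, showing via Rouch\'e that $g(Rz)+\phi_{n}(R,r,\alpha,\beta)g(rz)$ has all zeros in $|z|<1$, invoking Lemma \ref{l2} to locate the zeros of $B$ applied to it, and then deriving \eqref{le2'} by a contradiction argument with a suitable choice of $\lambda$. Your reduction buys brevity and avoids duplicating the Rouch\'e argument; the paper's self-contained version additionally records the zero-location statement for $B[H](z)$, though nothing later in the paper requires more than the modulus inequality, so nothing is lost by your shortcut. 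Your closing remark about strictness is the only slightly muddled point: no strict inequality is needed anywhere, since Lemma \ref{l3} is stated for $|P(z)|\le |F(z)|$ on $|z|=1$.
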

\begin{proof}
By hypothesis, all the zeros of $P(z)$ lie in $|z|\leq 1$ and 
$$  m |z|^n\leq |P(z)|\,\,\,\,\textnormal{for}\,\,\,\,|z|=1. $$ 
We first show that the polynomial $g(z)=P(z)-\lambda mz^n$ has all its zeros in $|z|\leq 1$ for every $\lambda\in\mathbb{C}$ with $|\lambda|<1.$ This is obvious if $m=0,$ that is if $P(z)$ has a zero on $|z|=1.$ Henceforth, we assume $P(z)$ has all its zeros in $|z|<1,$ then $m>0$ and it follows by Rouche's theorem that the polynomial $g(z) $ has all its zeros in $|z|<1$ for every $\lambda\in\mathbb{C}$ with $|\lambda|<1.$ Proceeding similarly as in the proof of Lemma \ref{l3}, we obtain that for $\alpha,\beta\in\mathbb{C}$ with $|\alpha|\leq 1,|\beta|\leq 1$ and $R >r \geq 1$, all the zeros of the polynomial
\begin{align*}
 H(z)=&g(Rz)-\alpha g(rz)+\beta\left\{\left(\frac{R+1}{r+1}\right)^{n}-|\alpha|\right\}g(rz)\\
 &=g(Rz)+\phi_{n}\left(R, r, \alpha, \beta\right)g(rz)\\
 &=\big(P(Rz)-\lambda R^nz^nm\big)+\phi_{n}\left(R, r, \alpha, \beta\right)\big(P(rz)-\lambda r^nz^nm\big)\\
 &=\big(P(Rz)+\phi_{n}\left(R, r, \alpha, \beta\right)P(rz)\big)-\lambda \big(R^n+\phi_{n}\left(R, r, \alpha, \beta\right)r^n\big)mz^n
\end{align*}
lie in $|z|<1.$ Applying Lemma \ref{l1} to $H(z)$ and noting that $B$ is a linear operator, it follows that all the zeros of polynomial 
\begin{align}\label{p1'}\nonumber
B[H](z)=&\left\{B[P\circ\sigma](z)+\phi_{n}\left(R, r, \alpha, \beta\right) B[P\circ\rho](z)    \right\}\\&\qquad\qquad\qquad\qquad-\lambda\big(R^n+\phi_{n}\left(R, r, \alpha, \beta\right)r^n\big)m B[z^n]
\end{align} 
lie in $|z|<1.$ This gives
\begin{align}\label{p2}\nonumber
&\left|B[P\circ\sigma](z)+\phi_{n}\left(R, r, \alpha, \beta\right) B[P\circ\rho](z)\right|\\&\qquad\qquad\qquad\qquad\geq |R^n+\phi_{n}\left(R, r, \alpha, \beta\right)r^n||\Lambda_n||z|^nm\,\,\,\,\,\textnormal{for}\,\,\,\,\,|z|\geq 1.
\end{align}
If \eqref{p2} is not true, then there is point $w$ with $|w|\geq 1$ such that 
\begin{align}
\left|B[P\circ\sigma](w)+\phi_{n}\left(R, r, \alpha, \beta\right) B[P\circ\rho](w)\right|< |R^n+\phi_{n}\left(R, r, \alpha, \beta\right)r^n||\Lambda_n||w|^nm.
\end{align}
We choose
$$ \lambda=\dfrac{B[P\circ\sigma](w)+\phi_{n}\left(R, r, \alpha, \beta\right) B[P\circ\rho](w)}{R^n+\phi_{n}\left(R, r, \alpha, \beta\right)r^n||\Lambda_n||w|^nm.},   $$
then clearly $|\lambda|<1$ and with this choice of $\lambda,$ from \eqref{p1'}, we get $B[H](w)=0$ with $|w|\geq 1.$ This is clearly a contradiction to the fact that all the zeros of $H(z)$ lie in $|z|<1.$ Thus for every  $\alpha,\beta\in\mathbb{C}$ with $|\alpha|\leq 1, $ $|\beta|\leq 1,$
$$\left|B[P\circ\sigma](z)+\phi_{n}\left(R, r, \alpha, \beta\right) B[P\circ\rho](z)\right|\geq |R^n+\phi_{n}\left(R, r, \alpha, \beta\right)r^n||\Lambda_n||z|^nm$$
for $|z|\geq 1$ and $R>r\geq 1.$
\end{proof}
\begin{lemma}\label{l4}
If $P\in \mathscr{P}_n$ and $P(z)$ does not vanish in $\left|z\right|< 1$, then for every $\alpha,\beta\in\mathbb{C}
$ with $|\alpha| \leq 1, |\beta|\leq 1, R> r\geq 1$ and $|z|\geq 1,$
\begin{align}\nonumber
|B[P\circ\sigma](z) +& \phi_{n}\left(R, r, \alpha, \beta\right) B[P\circ\rho](z)|\\
&\leq |B[P^{*}\circ\sigma](z) + \phi_{n}\left(R, r, \alpha, \beta\right) B[P^{*}\circ\rho](z)|
\end{align}
where $P^{*}(z):=z^{n}\overline{P(1/\overline{z})}$, $B\in\mathcal B_n$, $\sigma(z):=Rz$, $\rho(z):=rz$, and $\phi_n\left(R,r, \alpha, \beta\right)$ is defined by \eqref{phi}.
\end{lemma}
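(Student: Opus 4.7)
\medskip

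\noindent\textbf{Proof plan.} The strategy is to reduce the statement to a direct application of Lemma \ref{l3} by choosing $F(z):=P^{*}(z)$. The work is just to verify that $P^{*}$ satisfies the hypotheses of Lemma \ref{l3} relative to $P$.

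\medskip

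\noindent First I would record the two standard facts about the ``conjugate reciprocal'' polynomial $P^{*}(z)=z^{n}\overline{P(1/\overline{z})}$: (i) $P^{*}\in\mathscr{P}_{n}$ and its zeros are precisely the images of the zeros of $P$ under $z\mapsto 1/\overline{z}$; (ii) on the unit circle one has the identity $|P^{*}(z)|=|P(z)|$ for $|z|=1$, because $1/\overline{z}=z$ there. From hypothesis, $P$ has no zeros in $|z|<1$, so all zeros of $P$ lie in $|z|\geq 1$; applying (i) shows that all zeros of $P^{*}$ lie in $|z|\leq 1$. Combined with (ii), this means $F:=P^{*}$ fulfils both requirements of Lemma \ref{l3}: it is a polynomial of degree $n$ with all zeros in $|z|\leq 1$, and $|P(z)|\leq |F(z)|$ on $|z|=1$ (with equality, but the lemma only asks for $\leq$).

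\medskip

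\noindent Next I would invoke Lemma \ref{l3} with this choice of $F$. For every $\alpha,\beta\in\mathbb{C}$ with $|\alpha|\leq 1$, $|\beta|\leq 1$, every $R>r\geq 1$, and every $|z|\geq 1$, the lemma yields
\begin{align*}
&|B[P\circ\sigma](z)+\phi_{n}(R,r,\alpha,\beta)\,B[P\circ\rho](z)|\\
&\qquad\leq |B[F\circ\sigma](z)+\phi_{n}(R,r,\alpha,\beta)\,B[F\circ\rho](z)|\\
&\qquad= |B[P^{*}\circ\sigma](z)+\phi_{n}(R,r,\alpha,\beta)\,B[P^{*}\circ\rho](z)|,
\end{align*}
which is exactly the assertion of Lemma \ref{l4}.

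\medskip

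\noindent The only potential obstacle is a minor subtlety at the boundary case where $P$ has a zero on $|z|=1$, so that $P^{*}$ does too; in that case the inequality $|P(z)|\leq |P^{*}(z)|$ on $|z|=1$ is actually an equality, but Lemma \ref{l3} was proved under the weak hypothesis $|P(z)|\leq |F(z)|$ and treats the constant-ratio case separately, so no issue arises. Hence the proof is essentially a one-line appeal to Lemma \ref{l3} once the two properties of $P^{*}$ are in place.
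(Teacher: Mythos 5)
Your proposal is correct and follows essentially the same route as the paper: the paper's proof likewise observes that all zeros of $P^{*}(z)=z^{n}\overline{P(1/\overline{z})}$ lie in $|z|\leq 1$ (since $P$ does not vanish in $|z|<1$) and then applies Lemma \ref{l3} with $F=P^{*}$, using $|P(z)|=|P^{*}(z)|$ on $|z|=1$. Your extra remarks verifying the hypotheses of Lemma \ref{l3} are accurate but add nothing beyond the paper's one-line argument.
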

\begin{proof}
 By hypothesis the polynomial $P(z)$ of degree $n$ does not vanish in $|z|< 1$, therefore, all the zeros of the polynomial $P^{*}(z)=z^{n}\overline{P(1/\overline{z})}$ of degree $n$ lie in $|z|\leq 1$. Applying Lemma \ref{l3} with $F(z)$ replaced by $P^{*}(z)$, it follows that 
\begin{align*}
&\left|B[P\circ\sigma](z) + \phi_{n}\left(R, r, \alpha, \beta\right) B[P\circ\rho](z)\right|\\&\qquad\qquad\qquad\qquad\qquad
\leq \left|B[P^{*}\circ\sigma](z) + \phi_{n}\left(R, r, \alpha, \beta\right) B[P^{*}\circ\rho](z)\right|
\end{align*}
for $|z|\geq 1, |\alpha|\leq 1, |\beta|\leq 1$ and $R> r\geq 1$. This proves the Lemma \ref{l4}.
\end{proof}
  \begin{lemma}\label{l3'}
      If $ P\in\mathscr{P}_n $ and $ P(z) $ has no zero in $\left|z\right|<1,$ then for every $ \alpha\in\mathbb{C} $ with $ |\alpha|\leq 1,$ $ R>r\geq 1 $ and $ |z|\geq 1 $,
     \begin{align}\label{le3'}\nonumber
     &\left|B[P\circ\sigma](z)+\phi_{n}\left(R, r, \alpha, \beta\right) B[P\circ\rho](z)\right|\\\nonumber&\qquad\leq \left|B[P^{\star}\circ\sigma](z)+\phi_{n}\left(R, r, \alpha, \beta\right) B[P^{\star}\circ\rho](z)\right|\\&\qquad\quad-\Big(| R^n+\phi_{n}\left(R, r, \alpha, \beta\right) r^n||\Lambda_n|-|1+\phi_{n}\left(R, r, \alpha, \beta\right)||\lambda_0|\Big)  m,
     \end{align}
       where $P^{\star}(z)=z^{n}\overline{P(1/\overline{z})} ,$ $m={\min}_{|z|=1}|P(z)|,$ $B\in\mathcal B_n,$ $\sigma(z)=Rz,$ $\rho(z)=rz,$ $\Lambda_n$ and $\phi_{n}\left(R, r, \alpha, \beta\right)$ are given by \eqref{12} and \eqref{phi} respectively.
       \end{lemma}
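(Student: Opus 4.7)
The plan is to reduce Lemma \ref{l3'} to Lemma \ref{l4} by perturbing $P$ with a multiple of the constant $m$. For $\lambda \in \mathbb{C}$ with $|\lambda| < 1$, set $F(z) := P(z) - \lambda m$. Since $P$ has no zero in $|z| < 1$, the minimum modulus principle yields $|P(z)| \ge m$ throughout $|z| \le 1$, so $|\lambda m| < m \le |P(z)|$ forces $F$ to be nonvanishing in $|z| < 1$. A direct computation gives the companion $F^{\star}(z) = P^{\star}(z) - \overline{\lambda}\, m z^n$.

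Applying Lemma \ref{l4} to $F$ and exploiting the linearity of $B$ together with the elementary identities $B[1] = \lambda_0$ and $B[z^n] = \Lambda_n z^n$, one obtains for $|z| \ge 1$
\begin{equation*}
\bigl|A - \lambda m \lambda_0(1 + \phi_n)\bigr| \le \bigl|A^{\star} - \overline{\lambda}\, m \Lambda_n z^n (R^n + \phi_n r^n)\bigr|,
\end{equation*}
where I abbreviate $\phi_n := \phi_n(R, r, \alpha, \beta)$, $A := B[P\circ\sigma](z) + \phi_n B[P\circ\rho](z)$, and $A^{\star}$ is the analogous expression with $P^{\star}$ replacing $P$.

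Next, fix the argument of $\lambda$ so that $\overline{\lambda}\,\Lambda_n z^n (R^n + \phi_n r^n)$ is a positive real multiple of $A^{\star}$. Lemma \ref{l2'} applied to $P^{\star}$ (whose zeros all lie in $|z| \le 1$) gives $|A^{\star}| \ge m\,|\Lambda_n| |z|^n |R^n + \phi_n r^n|$, which guarantees that the chosen multiple has absolute value at most $|\lambda| < 1$; consequently the right-hand side collapses exactly to $|A^{\star}| - |\lambda| m |\Lambda_n| |z|^n |R^n + \phi_n r^n|$. Applying the reverse triangle inequality $|A - w| \ge |A| - |w|$ on the left, rearranging, and letting $|\lambda| \to 1^{-}$ produces
\[
|A| \le |A^{\star}| - m\bigl(|\Lambda_n| |z|^n |R^n + \phi_n r^n| - |\lambda_0||1 + \phi_n|\bigr).
\]
Since $|z|^n \ge 1$ on $|z| \ge 1$, this inequality implies the claimed bound: when the bracket is positive the factor $|z|^n$ only strengthens the subtraction, and when it is nonpositive the assertion reduces to the weaker Lemma \ref{l4} already available.

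The main technical hurdle is the phase alignment: the single complex parameter $\lambda$ controls both sides of the inequality simultaneously, yet we wish to saturate only the right-hand one. The reason a single choice of $\lambda$ suffices is that Lemma \ref{l2'} furnishes the precise lower bound on $|A^{\star}|$ needed to realise the equality case of the triangle inequality on the right, while the left-hand side is handled by the slack direction $|A - w| \ge |A| - |w|$, which is insensitive to the phase of $w$.
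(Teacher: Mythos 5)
Your proposal is correct and follows essentially the same route as the paper: perturb $P$ to $P\pm\lambda m$ (nonvanishing in $|z|<1$ by the minimum modulus principle), apply Lemma \ref{l4} to this perturbation using linearity of $B$ with $B[1]=\lambda_0$ and $B[z^n]=\Lambda_n z^n$, choose $\arg\lambda$ so the right-hand side collapses via Lemma \ref{l2'} applied to $P^{\star}$, use the reverse triangle inequality on the left, and let $|\lambda|\to 1$. The only cosmetic difference is the sign convention for $\lambda m$ and your closing case distinction on the bracket, which is unnecessary since $|z|^{n}\geq 1$ makes the derived bound imply the stated one unconditionally.
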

       \begin{proof}
By hypothesis $P(z)$ has all its zeros in $|z|\geq 1$ and 
\begin{align}\label{q1}
m\leq |P(z)|\,\,\,\textnormal{for}\,\,\,\,|z|=1.
\end{align}
We show $F(z)=P(z)+\lambda m$ does not vanish in $|z|<1$ for every $\lambda\in\mathbb{C}$ with $|\lambda|<1.$ This is obvious if $m=0$ that is, if $P(z)$ has a zero on $|z|=1.$ So we assume all the zeros of $P(z)$ lie in $|z|>1,$ then $m>0$ and by the maximum modulus principle, it follows from \eqref{q1},
\begin{align}\label{q2}
m< |P(z)|\,\,\,\textnormal{for}\,\,\,|z|<1.
\end{align} 
Now if $F(z)=P(z)+\lambda m=0$ for some $z_0$ with $|z_0|<1,$ then
\begin{align*}
P(z_0)+\lambda m=0
\end{align*} 
This implies
\begin{align}
|P(z_0)|= |\lambda|m\leq m,\,\,\,\textnormal{for}\,\,\,|z_0|<1
\end{align} 
which is clearly contradiction to \eqref{q2}. Thus the polynomial $F(z)$ does not vanish in $|z|<1$  for every $\lambda$ with $|\lambda|<1.$ Applying Lemma \ref{l3} to the polynomial $F(z),$ we get
\begin{align*}
|B[F\circ\sigma](z)+&\phi_{n}\left(R, r, \alpha, \beta\right) B[F\circ\rho](z)|\\&\leq |B[F^{\star}\circ\sigma](z)+\phi_{n}\left(R, r, \alpha, \beta\right) B[F^{\star}\circ\rho](z)|
\end{align*} 
for $|z|=1$ and $R>r\geq 1.$ Replacing $F(z)$ by $P(z)+\lambda m,$ we obtain
\begin{align}\label{q3}\nonumber
|B[P\circ\sigma](z)+\phi_{n}\left(R, r, \alpha, \beta\right)& B[P\circ\rho](z)+\lambda(1+\phi_{n}\left(R, r, \alpha, \beta\right))\lambda_0 m|\\\nonumber&\leq |B[P^{\star}\circ\sigma](z)+\phi_{n}\left(R, r, \alpha, \beta\right) B[P^{\star}\circ\rho](z)\\&\qquad\qquad\qquad+\bar{\lambda} (R^n+\phi_{n}\left(R, r, \alpha, \beta\right) r^n)\Lambda_n z^n m|
\end{align}
 Now choosing the argument of $\lambda$ in the right hand side of \eqref{q3} such that
 \begin{align*}\nonumber
|B[P^{\star}\circ\sigma](z)+\phi_{n}&\left(R, r, \alpha, \beta\right) B[P^{\star}\circ\rho](z)+\bar{\lambda} (R^n+\phi_{n}\left(R, r, \alpha, \beta\right) r^n)\Lambda_n z^n m|\\&=|B[P^{\star}\circ\sigma](z)+\phi_{n}\left(R, r, \alpha, \beta\right) B[P^{\star}\circ\rho](z)|\\&\qquad\qquad\qquad-|\bar{\lambda}|| R^n+\phi_{n}\left(R, r, \alpha, \beta\right) r^n||\Lambda_n| |z|^n m
\end{align*}
for $|z|=1,$which is possible by Lemma \ref{l2'},we get
\begin{align*}\nonumber
|B[P\circ\sigma](z)&+\phi_{n}\left(R, r, \alpha, \beta\right) B[P\circ\rho](z)|-|\lambda||1+\phi_{n}\left(R, r, \alpha, \beta\right)||\lambda_0| m\\\leq& |B[P^{\star}\circ\sigma](z)+\phi_{n}\left(R, r, \alpha, \beta\right) B[P^{\star}\circ\rho](z)|\\&\qquad\qquad\qquad-|\lambda|| R^n+\phi_{n}\left(R, r, \alpha, \beta\right) r^n||\Lambda_n| |z|^n m
\end{align*}
Equivalently,
\begin{align}\nonumber\label{cde}
|B[P\circ\sigma](z)&+\phi_{n}\left(R, r, \alpha, \beta\right) B[P\circ\rho](z)|\\\nonumber\leq& |B[P^{\star}\circ\sigma](z)+\phi_{n}\left(R, r, \alpha, \beta\right) B[P^{\star}\circ\rho](z)|\\&\quad-|\lambda|\Big(| R^n+\phi_{n}\left(R, r, \alpha, \beta\right) r^n||\Lambda_n|-|1+\phi_{n}\left(R, r, \alpha, \beta\right)||\lambda_0|\Big)  m.
\end{align}
Letting $|\lambda|\rightarrow 1$ in \eqref{cde} we obtain inequality \eqref{le3'} and this completes the proof of Lemma \ref{l3'}.
       \end{proof}
\indent Next we describe a result of Arestov \cite{2}.

For $\gamma = \left(\gamma_{0},\gamma_{1},\cdots,\gamma_{n}\right)\in \mathbb C^{n+1}\,\,\,\,\mbox{}\,\,\,\text{and}\,\,\,P(z)=\sum_{j=0}^{n}a_{j}z^{j}$, we define
$$C_{\gamma}P(z)=\sum_{j=0}^{n}\gamma_{j} a_{j}z^{j}.$$
\indent The operator $C_{\gamma}$ is said to be admissible if it preserves one of the following properties:
\begin{enumerate}
\item[(i)]   $P(z)$ has all its zeros in $\left\{z\in \mathbb C: |z| \leq 1\right\}$,
\item[(ii)]  $P(z)$ has all its zeros in $\left\{z\in \mathbb C: |z| \geq 1\right\}$. 
\end{enumerate}
The result of Arestov may now be stated as follows.
\begin{lemma}\cite[Theorem 2]{2}\label{l5}
\textit{ Let} $\phi(x)=\psi(\log x)$ \textit{where} $\psi$ \textit{is a convex non-decreasing function on} $\mathbb R$. \textit{Then for all} $P\in \mathscr{P}_n$ \textit{and each admissible operator} $\Lambda_{\gamma}$,
\begin{equation}
\int_{0}^{2\pi}\phi\left(|C_{\gamma}P(e^{i\theta})|\right)d\theta \leq \int_{0}^{2\pi}\phi\left(c(\gamma,n)|P(e^{i\theta})|\right)d\theta\nonumber
\end{equation}
where $c(\gamma,n)= \max \left(|\gamma_{0}|,|\gamma_{n}|\right)$.
\end{lemma}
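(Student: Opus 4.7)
This is Arestov's theorem from \cite{2}. The plan is to outline its proof strategy, which rests on three ideas: the Hadamard-product semigroup structure of admissible coefficient sequences, reduction to an elementary admissible operator whose symbol polynomial is linear, and a subharmonicity-plus-convexity estimate that produces the constant $\max(|\gamma_0|,|\gamma_n|)$ in the elementary case.

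First I would establish the semigroup step. The composition identity $C_\gamma\circ C_{\gamma'}=C_{\gamma\cdot\gamma'}$, with $(\gamma\cdot\gamma')_j=\gamma_j\gamma'_j$, together with the Szeg\H{o}--Grace coincidence theorem identifies admissibility of $C_\gamma$ with a zero-location condition on the symbol polynomial $\Gamma(z)=\sum_{j=0}^n\binom{n}{j}\gamma_jz^j$. The admissible symbols are therefore closed under Hadamard products, so factoring $\Gamma$ into linear pieces decomposes $C_\gamma$ into a composition of elementary admissible operators whose symbols are linear, with the associated constants $\max(|\gamma_0^{(k)}|,|\gamma_n^{(k)}|)$ multiplying telescopically to $\max(|\gamma_0|,|\gamma_n|)$.

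Second, for an elementary admissible operator I would construct a one-parameter family $w\mapsto C_{\gamma(w)}P(e^{i\theta})$, polynomial in an auxiliary variable $w$, whose zeros inherit constraints from the apolarity characterization of admissibility. Since $\phi=\psi\circ\log$ with $\psi$ convex non-decreasing, $\phi(|C_{\gamma(w)}P(e^{i\theta})|)$ is subharmonic in $w$, and the boundary values on $|w|=1$ reduce to expressions involving only the extremal coefficients $\gamma_0$ and $\gamma_n$. The mean value property combined with Jensen's formula for $\log|C_{\gamma(w)}P(e^{i\theta})|$ (identifying its circle average with the Mahler measure) then yields the pointwise-in-$\theta$ inequality, whose integrated form is
\begin{equation*}
\int_0^{2\pi}\phi(|C_\gamma P(e^{i\theta})|)\,d\theta\le \int_0^{2\pi}\phi\bigl(\max(|\gamma_0|,|\gamma_n|)\,|P(e^{i\theta})|\bigr)\,d\theta.
\end{equation*}
Iterating this elementary bound along the Hadamard decomposition from the first step completes the argument.

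The principal obstacle is setting up the parametric family so that its zeros are controlled by the admissibility condition uniformly in $\theta$: this is the step where the apolarity characterization feeds into a Mahler-type estimate, and it is what makes the constant $\max(|\gamma_0|,|\gamma_n|)$ sharp, with equality attained on the two-term polynomial $P(z)=az^n+b$. Once that zero-control is in hand, both the subharmonicity-plus-convexity estimate of the second step and the semigroup iteration of the first step are routine.
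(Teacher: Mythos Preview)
The paper does not prove this lemma at all: it is simply quoted from Arestov \cite[Theorem 2]{2} and used as a black box, with no argument supplied. So there is nothing to compare your proposal against in the paper itself.

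That said, your outline is a fair high-level sketch of Arestov's actual strategy: admissible coefficient sequences are closed under Hadamard products (via the Grace--Szeg\H{o} apolarity characterization), every admissible operator factors as a composition of elementary ones with linear symbol, and for each elementary factor a subharmonicity argument in an auxiliary parameter together with the convexity of $\psi$ yields the inequality with constant $\max(|\gamma_0|,|\gamma_n|)$. The telescoping of constants along the factorization is correct. If you were required to supply a proof, the main place your sketch would need to be made precise is the construction of the one-parameter family $w\mapsto C_{\gamma(w)}P(e^{i\theta})$ and the verification that the boundary values on $|w|=1$ really do collapse to the claimed form; this is the technical heart of Arestov's paper and is not quite routine. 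But for the purposes of the present paper, citing the result as the authors do is entirely standard.
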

In particular Lemma \ref{l5} applies with $\phi : x\rightarrow x^{p}$ for every $ p \in(0,\infty)$ and $\phi : x\rightarrow \log x$ as well. Therefore, we have for $0 \leq p <\infty$,
\begin{equation}\label{av2}
 \left\{\int_{0}^{2\pi}\phi\left(|C_{\gamma}P(e^{i\theta})|^{p}\right)d\theta\right\}^{1/p}\leq c(\gamma,n)\left\{\int_{0}^{2\pi}\left|P(e^{i\theta})\right|^{p}d\theta\right\}^{1/p}.
\end{equation}
From Lemma \ref{l5}, we deduce the following result.
 \begin{lemma}\label{l6}
 \textit{ If} $P\in \mathscr{P}_n$ \textit{and} $P(z)$ \textit{does not vanish in} $\left|z\right|< 1$, \textit {then for each} $p > 0$, $R > 1$ and $\eta$ real, $0 \leq\eta <2\pi$,
 \begin{align}\nonumber
 \int_{0}^{2\pi}|\big(B[P\circ\sigma](e^{i\theta})&+\phi_n(R,r,\alpha,\beta)B[P\circ\rho](e^{i\theta})\big)e^{i\eta}\\\nonumber
 &+\big(B[P^{*}\circ\sigma]^{*}(e^{i\theta})+\phi_n(R,r,\bar{\alpha},\bar{\beta})B[P^{*}\circ\rho]^{*}(e^{i\theta})\big)|^{p}d\theta\\\nonumber
 \leq|(R^{n}+\phi_{n}(R,r,&\alpha,\beta)r^{n})\Lambda_{n}e^{i\eta}+(1+\phi_{n}(R,r,\bar{\alpha},\bar{\beta}))\bar{\lambda_{0}}|^{p}\int_{0}^{2\pi}\left|P(e^{i\theta})\right|^{p}d\theta\nonumber
 \end{align}
 where $B\in\mathcal B_n$, $\sigma(z):=Rz$, $\rho(z):=rz$, $B[P^{*}\circ\sigma]^{*}(z):=(B[P^{*}\circ\sigma](z))^{*}$, $\Lambda_n$ \textit{and}
 $\phi_{n}\left(R, r, \alpha, \beta\right)$ \textit{are defined by} \eqref{12} \textit{and} \eqref{phi} \textit{respectively}.
 \end{lemma}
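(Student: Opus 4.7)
The plan is to recognize the integrand on the left-hand side as $|C_\gamma P(e^{i\theta})|^p$ for a suitable linear coefficient-multiplier operator $C_\gamma$ on $\mathscr{P}_n$ that is admissible in the sense of Arestov, and then invoke Lemma \ref{l5} applied with $\phi:x\mapsto x^p$.

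Setting $\Phi:=\phi_n(R,r,\alpha,\beta)$, so that $\phi_n(R,r,\bar\alpha,\bar\beta)=\bar\Phi$, define
\[
C_\gamma P(z):=e^{i\eta}\bigl(B[P\circ\sigma](z)+\Phi\,B[P\circ\rho](z)\bigr)+\bigl(B[P^{*}\circ\sigma]^{*}(z)+\bar\Phi\,B[P^{*}\circ\rho]^{*}(z)\bigr).
\]
Since $B[z^j]=\mu_j z^j$ with $\mu_j:=\lambda_0+\lambda_1\tfrac{nj}{2}+\lambda_2\tfrac{n^2 j(j-1)}{8}$ (so $\mu_0=\lambda_0$ and $\mu_n=\Lambda_n$), and since reciprocation sends $\sum c_j z^j\mapsto \sum\bar c_{n-j}z^j$, a direct coefficient count gives that $C_\gamma$ acts as $a_k\mapsto \gamma_k a_k$ with
\[
\gamma_k=e^{i\eta}\mu_k(R^k+\Phi r^k)+\bar\mu_{n-k}(R^{n-k}+\bar\Phi r^{n-k}).
\]
Writing $A:=\Lambda_n(R^n+\Phi r^n)$ and $C:=\lambda_0(1+\Phi)$, one finds $\gamma_n=e^{i\eta}A+\bar C$ and $\gamma_0=e^{i\eta}C+\bar A$; expanding $|\gamma_n|^2$ and $|\gamma_0|^2$ yields the common value $|A|^2+|C|^2+2\operatorname{Re}(e^{i\eta}AC)$, so that $|\gamma_0|=|\gamma_n|$, and this common value is precisely the factor appearing on the right-hand side of the claimed inequality.

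Next I would verify that $C_\gamma$ is admissible by showing it preserves property (ii): if $P$ has no zero in $|z|<1$, then $C_\gamma P$ has no zero in $|z|<1$. Put $\mathcal{G}[Q](z):=B[Q\circ\sigma](z)+\Phi\,B[Q\circ\rho](z)$. Since $P^{*}$ has all its zeros in $|z|\leq 1$, the reasoning used in the proof of Lemma \ref{l3} with $F$ replaced by $P^{*}$, combined with Lemma \ref{l2}, yields that $\mathcal{G}[P^{*}]$ has all its zeros in $|z|\leq 1$, and therefore $\mathcal{G}[P^{*}]^{*}$ has all its zeros in $|z|\geq 1$. Moreover, Lemma \ref{l4} gives $|\mathcal{G}[P](z)|\leq|\mathcal{G}[P^{*}](z)|=|\mathcal{G}[P^{*}]^{*}(z)|$ on $|z|=1$. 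A Rouche-type argument (first for $|\mu|<1$ and then letting $|\mu|\to 1$ by continuity of zeros) then shows that $\mathcal{G}[P^{*}]^{*}(z)+\mu\,\mathcal{G}[P](z)$ has no zero in $|z|<1$ whenever $|\mu|\leq 1$; specialising to $\mu=e^{i\eta}$ produces $C_\gamma P$, confirming admissibility.

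Finally, Lemma \ref{l5} applied with $\phi(x)=x^p$, together with the value $c(\gamma,n)=\max(|\gamma_0|,|\gamma_n|)=|\gamma_n|$, yields the asserted integral inequality. I expect the main obstacle to be the boundary case $|\mu|=1$ in the admissibility step, since Lemma \ref{l4} only provides a non-strict inequality on $|z|=1$ and one must exclude the creation of zeros inside $|z|<1$; the standard resolution is the continuity-of-zeros argument from the strict case $|\mu|<1$, exactly as in the final paragraph of the proof of Lemma \ref{l3}.
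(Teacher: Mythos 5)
Your proposal is correct and follows essentially the same route as the paper: recognize the left-hand integrand as $|C_{\gamma}P(e^{i\theta})|^{p}$ for the coefficient-multiplier operator built from $B$, $\sigma$, $\rho$ and the conjugation $*$, establish that $C_{\gamma}$ preserves the class of polynomials with no zeros in $|z|<1$ (via Lemma \ref{l4}, Lemma \ref{l2} and a Rouch\'e-type argument), and then apply Arestov's result (Lemma \ref{l5}) with $\phi(x)=x^{p}$ and $c(\gamma,n)=\max(|\gamma_{0}|,|\gamma_{n}|)$. The only differences are cosmetic: you verify $|\gamma_{0}|=|\gamma_{n}|$ by an explicit coefficient computation (which the paper leaves implicit) and treat the unimodular factor $e^{i\eta}$ by a limiting Rouch\'e/continuity-of-zeros argument, whereas the paper passes through the strict inequality inside $|z|<1$ obtained from the maximum modulus principle before applying Rouch\'e.
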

\begin{proof}
 Since $P(z)$ does not vanish in $\left|z\right|< 1$ and $P^{*}(z)= z^{n}\overline{P(1/\bar{z})}$, by Lemma \ref{l4}, we have for $R>r\geq1$,
\begin{align}\nonumber\label{l6e1}
|B[P\circ\sigma](z) + \phi_{n}&\left(R, r, \alpha, \beta\right)B[P\circ\rho](z)|\\ 
&\leq |B[P^{*}\circ\sigma](z) + \phi_{n}\left(R, r, \alpha, \beta\right) B[P^{*}\circ\rho](z)|
\end{align}
Also, since\\
$P^{*}(Rz)+\phi_{n}\left(R, r, \alpha, \beta\right)P^{*}(rz)= R^{n}z^{n}\overline{P(1/R\bar{z})}+\phi_{n}\left(R, r, \alpha, \beta\right)
r^{n}z^{n}\overline{P(1/r\bar{z})}$,
therefore,  
\begin{align}\nonumber
&B[P^{*}\circ\sigma](z)+\phi_{n}(R,r,\alpha,\beta)B[P^{*}\circ\rho](z)\\\nonumber
&=\lambda_{0}\big(R^{n}z^{n}\overline{P(1/R\bar{z})}+\phi_{n}\left(R, r, \alpha, \beta\right)r^{n}z^{n}\overline{P(1/r\bar{z})}\big)
+\lambda_{1}\left(\frac{nz}{2}\right)\Big(nR^{n}z^{n-1}\overline{P(1/R\bar{z})}\\\nonumber
&\quad -R^{n-1}z^{n-2}\overline{P^{\prime}(1/R\bar{z})}+\phi_{n}\left(R, r, \alpha, \beta\right)\big(nr^{n}z^{n-1}\overline{P(1/r\bar{z})}
-r^{n-1}z^{n-2}\overline{P^{\prime}(1/r\bar{z})}\big)\Big)\\\nonumber
&\quad + \frac{\lambda_{2}}{2!}\left(\frac{nz}{2}\right)^{2}\Big(n(n-1)R^{n}z^{n-2}\overline{P(1/R\bar{z})}
-2(n-1)R^{n-1}z^{n-3}\overline{P^{\prime}(1/R\bar{z})}\\\nonumber
&\quad + R^{n-2}z^{n-4}\overline{P^{\prime\prime}(1/R\bar{z})}
+\phi_{n}\left(R, r, \alpha, \beta\right)\big(n(n-1)r^{n}z^{n-2}\overline{P(1/r\bar{z})}\\\nonumber
&\quad -2(n-1)r^{n-1}z^{n-3}\overline{P^{\prime}(1/r\bar{z})}
+ r^{n-2}z^{n-4}\overline{P^{\prime\prime}(1/r\bar{z})}\big)\Big)
\end{align}
and hence,
\begin{align}\nonumber
 B[P^{*}\circ\sigma]^{*}&(z)+\phi\left(R, r, \bar{\alpha}, \bar{\beta}\right)B[P^{*}\circ\rho]^{*}(z)\\\nonumber
=&\big(B[P^{*}\circ\sigma](z)+\phi_{n}\left(R, r, \alpha, \beta\right)B[P^{*}\circ\rho](z)\big)^{*}\\\nonumber 
=&\left(\bar{\lambda_{0}}+\bar{\lambda_{1}}\frac{n^{2}}{2}+\bar{\lambda_{2}}\frac{n^{3}(n-1)}{8}\right)\left(R^{n}P(z/R)+\phi\left(R, r, \bar{\alpha}, \bar{\beta}\right)r^{n}P(z/r)\right)\\\nonumber
&-\left(\bar{\lambda_{1}}\frac{n}{2}+\bar{\lambda_{2}}\frac{n^{2}(n-1)}{4}\right)\Big(R^{n-1}zP^{\prime}(z/R)+\phi\left(R, r, \bar{\alpha}, \bar{\beta}\right)r^{n-1}zP^{\prime}(z/r)\Big)\\
&+\bar{\lambda_{2}}\frac{n^{2}}{8}\Big(R^{n-2}z^{2}P^{\prime\prime}(z/R)+\phi\left(R, r, \bar{\alpha}, \bar{\beta}\right)r^{n-2}z^{2}P^{\prime\prime}(z/r)\Big).
\end{align}
Also, for $|z|=1$
\begin{align}\nonumber
|B[P^{*}\circ\sigma](z)+&\phi_{n}\left(R, r, \alpha, \beta\right)B[P^{*}\circ\rho](z)|\\\nonumber
&=|B[P^{*}\circ\sigma]^{*}(z)+\phi\left(R, r, \bar{\alpha}, \bar{\beta}\right)B[P^{*}\circ\rho]^{*}(z)|.
\end{align}
Using this in \eqref{l6e1}, we get for $|z|=1$ and $R >r\geq1$,
\begin{align}\nonumber 
|B[P\circ\sigma](z) + &\phi_{n}\left(R, r, \alpha, \beta\right)B[P\circ\rho](z)|\\\nonumber
&\leq|B[P^{*}\circ\sigma]^{*}(z)+\phi\left(R, r, \bar{\alpha}, \bar{\beta}\right)B[P^{*}\circ\rho]^{*}(z)|.
\end{align}
\indent Since  all the zeros of $P^{*}(z)$ lie in $ |z|\leq1$, as before, all the zeros of $P^{*}(Rz)+\phi_{n}(R,r,\alpha,\beta)P^{*}(rz)$ lie in $|z|<1$ for all real or complex numbers $\alpha, \beta$ with $|\alpha|\leq1$, $|\beta|\leq1$ and $R > r\geq 1$. Hence by Lemma \ref{l2}, all the zeros of $B[P^{*}\circ\sigma](z)+\phi_{n}(R,r,\alpha,\beta)B[P^{*}\circ\rho](z)$ lie in $|z| < 1$, therefore, all the zeros of $B[P^{*}\circ\sigma]^{*}(z)+\phi_{n}(R,r,\bar{\alpha},\bar{\beta})B[P^{*}\circ\rho]^{*}(z)$ lie in $|z| > 1$. Hence by the maximum modulus principle, 
\begin{align}\nonumber\label{l6e2}
|B[P\circ\sigma](z) + &\phi_{n}\left(R, r, \alpha, \beta\right) B[P^{*}\circ\rho](z)|\\
&<|B[P^{*}\circ\sigma]^{*}(z)+\phi\left(R, r, \bar{\alpha}, \bar{\beta}\right)B[P^{*}\circ\rho]^{*}(z)| \quad \textrm{for}\quad |z|<1.
\end{align}
A direct application of Rouche's theorem shows that
\begin{align}\nonumber
C_{\gamma}P(z)=&\big(B[P\circ\sigma](z)+\phi_{n}(R, r, \alpha, \beta) B[P\circ\rho](z)\big)e^{i\eta}\\\nonumber
&+\big(B[P^{*}\circ\sigma]^{*}(z)+\phi_{n}(R, r, \bar{\alpha}, \bar{\beta})B[P^{*}\circ\rho]^{*}(z)\big)\\\nonumber
=&\left\{(R^{n}+\phi_{n}(R,r,\alpha,\beta)r^{n})\Lambda_{n} e^{i\eta}
+(1+\phi_{n}(R,r,\bar{\alpha},\bar{\beta}))\bar{\lambda_{0}}\right\}a_{n}z^{n}\\\nonumber
&+\cdots+\left\{(R^{n}+\phi_{n}(R,r,\bar{\alpha},\bar{\beta})r^{n})\bar{\Lambda_{n}}
+e^{i\eta}(1+\phi_{n}(R,r, \alpha,\beta))\lambda_{0}\right\}a_{0}\nonumber
\end{align}
does not vanish in $|z| < 1$. Therefore, $C_{\gamma}$ is an admissible operator. Applying \eqref{av2} of Lemma \ref{l5}, the desired result follows immediately for each $p > 0$.
\end{proof}
 We also need the following lemma \cite{ar97}.
 \begin{lemma}\label{abc}
 If $A,B,C$ are non-negative real numbers such that $B+C\leq A,$ then for each real number $\gamma,$
 $$ |(A-C)e^{i\gamma}+(B+C)|\leq |Ae^{i\gamma}+B|.    $$
 \end{lemma}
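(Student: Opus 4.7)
The plan is to square both sides and reduce the inequality to a transparent non-positivity statement. First I would observe that the hypotheses $B, C \geq 0$ together with $B+C \leq A$ force $A \geq C \geq 0$, so $A-C$ and $B+C$ are both non-negative real numbers. Thus for any real $\gamma$, both quantities inside the moduli are of the form $xe^{i\gamma}+y$ with $x, y \in \mathbb{R}$, and I can use the identity
\[
|xe^{i\gamma}+y|^2 = x^2 + y^2 + 2xy\cos\gamma.
\]

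Next I would compute the difference of squared moduli. Expanding
\[
|(A-C)e^{i\gamma}+(B+C)|^2 - |Ae^{i\gamma}+B|^2
\]
term by term gives
\[
[(A-C)^2 - A^2] + [(B+C)^2 - B^2] + 2\cos\gamma\,[(A-C)(B+C)-AB].
\]
The first bracket equals $C^2 - 2AC$, the second equals $2BC + C^2$, and the third equals $C(A-B-C)$. Collecting terms and pulling out the common factor $2C$, the whole expression simplifies to
\[
2C(A-B-C)(\cos\gamma - 1).
\]

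Finally I would invoke the sign conditions: $C \geq 0$ by hypothesis, $A - B - C \geq 0$ by the assumption $B+C \leq A$, and $\cos\gamma - 1 \leq 0$ for every real $\gamma$. Hence the product above is $\leq 0$, which is exactly the desired inequality after taking square roots. There is essentially no obstacle here beyond the bookkeeping in the expansion step; the key idea is recognising that after cancellation the difference factors cleanly as $2C(A-B-C)(\cos\gamma - 1)$, a product in which each factor carries a known sign.
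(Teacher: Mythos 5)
Your proof is correct: the expansion of $|(A-C)e^{i\gamma}+(B+C)|^2 - |Ae^{i\gamma}+B|^2$ does collapse to $2C(A-B-C)(\cos\gamma-1)\leq 0$, and the sign conditions are exactly as you state (note $A-C\geq B\geq 0$, so both sides are moduli of the standard form and taking square roots is legitimate). The paper itself gives no proof of this lemma, quoting it from Aziz and Rather (1997), where the argument is essentially this same squaring computation, so nothing further is needed.
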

\section{Proof of the Theorems} 
\begin{proof}[\textnormal{\bf Proof of Theorem \ref{t2}}]
   By hypothesis $P(z)$ does not vanish in $|z|<1,$ therefore by Lemma \ref{l3'}, we have
   \begin{align}\label{r1}\nonumber
     &\left|B[P\circ\sigma](z)+\phi_{n}\left(R, r, \alpha, \beta\right) B[P\circ\rho](z)\right|\\\nonumber&\qquad\leq \left|B[P^{\star}\circ\sigma](z)+\phi_{n}\left(R, r, \alpha, \beta\right) B[P^{\star}\circ\rho](z)\right|\\&\qquad\quad-\Big(| R^n+\phi_{n}\left(R, r, \alpha, \beta\right) r^n||\Lambda_n|-|1+\phi_{n}\left(R, r, \alpha, \beta\right)||\lambda_0|\Big)  m,
   \end{align}
    for $|z|=1,$ $|\alpha|\leq 1$ and $R>r\geq 1$ where $P^{\star}(z)=z^n\overline{P(1/\overline{z})}.$
  Since $B[P^{\star}\circ\sigma]^{\star}(z)+\phi_{n}\left(R, r,\bar{\alpha},\bar{\beta}\right)B[P^{\star}\circ\rho]^{\star}(z)$ is the conjugate of $B[P^{\star}\circ\sigma](z)+\phi_{n}\left(R, r, \alpha, \beta\right) B[P^{\star}\circ\rho](z)$ and
  \begin{align*}
 |B[P^{\star}\circ\sigma]^{\star}(z)&+\phi_{n}\left(R, r, \bar{\alpha}, \bar{\beta}\right)B[P^{\star}\circ\rho]^{\star}(z)|\\&=|B[P^{\star}\circ\sigma](z)+\phi_{n}\left(R, r, \alpha, \beta\right) B[P^{\star}\circ\rho](z)|
  \end{align*}
  Thus \eqref{r1} can be written as 
  \begin{align}\label{r2}\nonumber
\big|B&[P\circ\sigma](z)+\phi_{n}\left(R, r, \alpha, \beta\right) B[P\circ\rho](z)\big|\\\nonumber&\qquad\qquad+\dfrac{\Big(| R^n+\phi_{n}\left(R, r, \alpha, \beta\right) r^n||\Lambda_n|-|1+\phi_{n}\left(R, r, \alpha, \beta\right)||\lambda_0|\Big)m}{2}\\\nonumber&\leq\left|B[P^{\star}\circ\sigma]^{\star}(z)+\phi_{n}\left(R, r,\bar{\alpha},\bar{\beta}\right) B[P^{\star}\circ\rho]^{\star}(z)\right|\\&\qquad\qquad-\dfrac{\Big(| R^n+\phi_{n}\left(R, r, \alpha, \beta\right) r^n||\Lambda_n|-|1+\phi_{n}\left(R, r, \alpha, \beta\right)||\lambda_0|\Big)m}{2},
  \end{align}
for $|z|=1.$  Taking 
  $$A=\left|B[P^{\star}\circ\sigma]^{\star}(z)+\phi_{n}\left(R, r,\bar{\alpha},\bar{\beta}\right) B[P^{\star}\circ\rho]^{\star}(z)\right|  $$
  $$B=\big|B[P\circ\sigma](z)+\phi_{n}\left(R, r, \alpha, \beta\right) B[P\circ\rho](z)\big|,$$
  and 
  $$  C=\dfrac{\Big(| R^n+\phi_{n}\left(R, r, \alpha, \beta\right) r^n||\Lambda_n|-|1+\phi_{n}\left(R, r, \alpha, \beta\right)||\lambda_0|\Big)m}{2} $$
  in Lemma \ref{abc} and noting by \eqref{r2} that 
  $$ B+C\leq A-C\leq A,   $$
  we get for every real $\gamma$,
  \begin{align*}
  \Bigg|\Bigg\{\big|B[P^{\star}&\circ\sigma]^{\star}(e^{i\theta})+\phi_{n}\left(R, r,\bar{\alpha},\bar{\beta}\right) B[P^{\star}\circ\rho]^{\star}(e^{i\theta})\big|\\&\qquad-\dfrac{\Big(| R^n+\phi_{n}\left(R, r, \alpha, \beta\right) r^n||\Lambda_n|-|1+\phi_{n}\left(R, r, \alpha, \beta\right)||\lambda_0|\Big)m}{2}\Bigg\}e^{i\gamma}\\+\Bigg\{\big|B&[P\circ\sigma](e^{i\theta})+\phi_{n}\left(R, r, \alpha, \beta\right) B[P\circ\rho](e^{i\theta})\big|\\&\qquad+\dfrac{\Big(| R^n+\phi_{n}\left(R, r, \alpha, \beta\right) r^n||\Lambda_n|-|1+\phi_{n}\left(R, r, \alpha, \beta\right)||\lambda_0|\Big)m}{2}\Bigg|\\\leq &\Big|\big|B[P^{\star}\circ\sigma]^{\star}(e^{i\theta})+\phi_{n}\left(R, r,\bar{\alpha},\bar{\beta}\right) B[P^{\star}\circ\rho]^{\star}(e^{i\theta})\big|e^{i\gamma}\\&\qquad\qquad\qquad\qquad\quad+\big|B[P\circ\sigma](e^{i\theta})+\phi_{n}\left(R, r, \alpha, \beta\right) B[P\circ\rho](e^{i\theta})\big|\Big|.
  \end{align*}
  This implies for each $p>0,$
  \begin{align}\nonumber\label{r3}
 \int\limits_{0}^{2\pi} \Bigg|\Bigg\{\big|B[P^{\star}&\circ\sigma]^{\star}(e^{i\theta})+\phi_{n}\left(R, r,\bar{\alpha},\bar{\beta}\right) B[P^{\star}\circ\rho]^{\star}(e^{i\theta})\big|\\\nonumber&\qquad-\dfrac{\Big(| R^n+\phi_{n}\left(R, r, \alpha, \beta\right) r^n||\Lambda_n|-|1+\phi_{n}\left(R, r, \alpha, \beta\right)||\lambda_0|\Big)m}{2}\Bigg\}e^{i\gamma}\\\nonumber+\Bigg\{\big|B&[P\circ\sigma](e^{i\theta})+\phi_{n}\left(R, r, \alpha, \beta\right) B[P\circ\rho](e^{i\theta})\big|\\\nonumber&\qquad+\dfrac{\Big(| R^n+\phi_{n}\left(R, r, \alpha, \beta\right) r^n||\Lambda_n|-|1+\phi_{n}\left(R, r, \alpha, \beta\right)||\lambda_0|\Big)m}{2}\Bigg|^pd\theta\\\nonumber\leq\int\limits_{0}^{2\pi} &\Big|\big|B[P^{\star}\circ\sigma]^{\star}(e^{i\theta})+\phi_{n}\left(R, r,\bar{\alpha},\bar{\beta}\right) B[P^{\star}\circ\rho]^{\star}(e^{i\theta})\big|e^{i\gamma}\\&\quad\quad\qquad\qquad\quad+\big|B[P\circ\sigma](e^{i\theta})+\phi_{n}\left(R, r, \alpha, \beta\right) B[P\circ\rho](e^{i\theta})\big|\Big|^pd\theta.
  \end{align}
  Integrating both sides of \eqref{r3} with respect to $\gamma$ from $0$ to $2\pi,$ we get with the help of Lemma \ref{l6} for each $p>0,$
  \begin{align}\nonumber\label{r4}
 \int\limits_{0}^{2\pi} \int\limits_{0}^{2\pi} \Bigg|\Bigg\{\big|B[P^{\star}&\circ\sigma]^{\star}(e^{i\theta})+\phi_{n}\left(R, r,\bar{\alpha},\bar{\beta}\right) B[P^{\star}\circ\rho]^{\star}(e^{i\theta})\big|\\\nonumber&-\dfrac{\Big(| R^n+\phi_{n}\left(R, r, \alpha, \beta\right) r^n||\Lambda_n|-|1+\phi_{n}\left(R, r, \alpha, \beta\right)||\lambda_0|\Big)m}{2}\Bigg\}e^{i\gamma}\\\nonumber+\Bigg\{\big|B&[P\circ\sigma](e^{i\theta})+\phi_{n}\left(R, r, \alpha, \beta\right) B[P\circ\rho](e^{i\theta})\big|\\\nonumber&+\dfrac{\Big(| R^n+\phi_{n}\left(R, r, \alpha, \beta\right) r^n||\Lambda_n|-|1+\phi_{n}\left(R, r, \alpha, \beta\right)||\lambda_0|\Big)m}{2}\Bigg|^pd\theta d\gamma\\\nonumber\leq\int\limits_{0}^{2\pi}\int\limits_{0}^{2\pi} \Big|\big|B[&P^{\star}\circ\sigma]^{\star}(e^{i\theta})+\phi_{n}\left(R, r,\bar{\alpha},\bar{\beta}\right) B[P^{\star}\circ\rho]^{\star}(e^{i\theta})\big|e^{i\gamma}\\\nonumber&\quad\quad\qquad\qquad+\big|B[P\circ\sigma](e^{i\theta})+\phi_{n}\left(R, r, \alpha, \beta\right) B[P\circ\rho](e^{i\theta})\big|\Big|^pd\theta d\gamma\\\nonumber\leq\int\limits_{0}^{2\pi}\Bigg\{\int\limits_{0}^{2\pi} \Big|&\big|B[P^{\star}\circ\sigma]^{\star}(e^{i\theta})+\phi_{n}\left(R, r,\bar{\alpha},\bar{\beta}\right) B[P^{\star}\circ\rho]^{\star}(e^{i\theta})\big|e^{i\gamma}\\\nonumber&\quad\quad\qquad\qquad+\big|B[P\circ\sigma](e^{i\theta})+\phi_{n}\left(R, r, \alpha, \beta\right) B[P\circ\rho](e^{i\theta})\big|\Big|^pd\gamma\Bigg\}\theta \\\nonumber\leq\int\limits_{0}^{2\pi}\Bigg\{\int\limits_{0}^{2\pi} &\Bigg|\Big(B[P^{\star}\circ\sigma]^{\star}(e^{i\theta})+\phi_{n}\left(R, r,\bar{\alpha},\bar{\beta}\right) B[P^{\star}\circ\rho]^{\star}(e^{i\theta})\Big)e^{i\gamma}\\\nonumber&\quad\quad\qquad\qquad+\Big(B[P\circ\sigma](e^{i\theta})+\phi_{n}\left(R, r, \alpha, \beta\right) B[P\circ\rho](e^{i\theta})\Big)\Bigg|^pd\gamma\Bigg\}\theta \\\nonumber\leq\int\limits_{0}^{2\pi}\Bigg\{\int\limits_{0}^{2\pi} &\Bigg|\Big(B[P^{\star}\circ\sigma]^{\star}(e^{i\theta})+\phi_{n}\left(R, r,\bar{\alpha},\bar{\beta}\right) B[P^{\star}\circ\rho]^{\star}(e^{i\theta})\Big)e^{i\gamma}\\\nonumber&\quad\quad\qquad\qquad+\Big(B[P\circ\sigma](e^{i\theta})+\phi_{n}\left(R, r, \alpha, \beta\right) B[P\circ\rho](e^{i\theta})\Big)\Bigg|^pd\theta\Bigg\}\gamma\\\nonumber\leq \int\limits_{0}^{2\pi}&\Big|(R^{n}+\phi_{n}(R,r,\alpha,\beta)r^{n})\Lambda_{n}e^{i\gamma}+(1+\phi_{n}(R,r,\bar{\alpha},\bar{\beta}))\bar{\lambda_{0}}\Big|^{p}d\gamma\\&\qquad\qquad\qquad\qquad\times\int_{0}^{2\pi}\left|P(e^{i\theta})\right|^{p}d\theta
  \end{align}
  Now it can be easily verified that for every real number $\gamma$ and $s \geq 1$,
  $$\left|s+e^{i\alpha}\right| \geq \left|1+e^{i\alpha}\right|.$$
  This implies for each $ p > 0$,
  \begin{equation}\label{r5}
  \int_{0}^{2\pi}\left|s+e^{i\gamma}\right|^{p}d\gamma \geq \int_{0}^{2\pi}\left|1+e^{i\gamma}\right|^{p}d\gamma. 
  \end{equation}
  If
  \begin{align*}
  \big|B[P\circ\sigma]&(e^{i\theta})+\phi_{n}(R,r,\alpha,\beta) B[P\circ\rho](e^{i\theta})\big|\\+&\dfrac{\Big(| R^n+\phi_{n}\left(R, r, \alpha, \beta\right) r^n||\Lambda_n|-|1+\phi_{n}\left(R, r, \alpha, \beta\right)||\lambda_0|\Big)m}{2}\neq 0,
  \end{align*}
   we take $$ s = \dfrac{\splitfrac{\big|B[P^{\star}\circ\sigma]^{\star}(e^{i\theta})+\phi_{n}\left(R, r,\bar{\alpha},\bar{\beta}\right) B[P^{\star}\circ\rho]^{\star}(e^{i\theta})\big|}{+\dfrac{\Big(| R^n+\phi_{n}\left(R, r, \alpha, \beta\right) r^n||\Lambda_n|-|1+\phi_{n}\left(R, r, \alpha, \beta\right)||\lambda_0|\Big)m}{2}}}{\splitfrac{ \big|B[P\circ\sigma](e^{i\theta})+\phi_{n}(R,r,\alpha,\beta) B[P\circ\rho](e^{i\theta})\big|}{+\dfrac{\Big(| R^n+\phi_{n}\left(R, r, \alpha, \beta\right) r^n||\Lambda_n|-|1+\phi_{n}\left(R, r, \alpha, \beta\right)||\lambda_0|\Big)m}{2}}},$$ then by \eqref{r2}, $s \geq 1$ and we get with the help of \eqref{r5},
   \begin{align}\nonumber\label{r6}
  \int\limits_{0}^{2\pi} \Bigg|\Bigg\{&\big|B[P^{\star}\circ\sigma]^{\star}(e^{i\theta})+\phi_{n}\left(R, r,\bar{\alpha},\bar{\beta}\right)B[P^{\star}\circ\rho]^{\star}(e^{i\theta})\big|\\\nonumber&\quad\quad-\dfrac{\Big(| R^n+\phi_{n}\left(R, r, \alpha, \beta\right) r^n||\Lambda_n|-|1+\phi_{n}\left(R, r, \alpha, \beta\right)||\lambda_0|\Big)m}{2}\Bigg\}e^{i\gamma}\\\nonumber+\Bigg\{&\big|B[P\circ\sigma](e^{i\theta})+ \phi_{n}\left(R, r, \alpha, \beta\right)B[P\circ\rho](e^{i\theta})\big|\\\nonumber&\quad\quad+\dfrac{\Big(| R^n+\phi_{n}\left(R, r, \alpha, \beta\right) r^n||\Lambda_n|-|1+\phi_{n}\left(R, r, \alpha, \beta\right)||\lambda_0|\Big)m}{2}\Bigg\}\Bigg|^p d\gamma\\\nonumber=\Bigg|&\big|B[P\circ\sigma](e^{i\theta})+ \phi_{n}\left(R, r, \alpha, \beta\right)B[P\circ\rho](e^{i\theta})\big|\\\nonumber&\quad\quad+\dfrac{\Big(| R^n+\phi_{n}\left(R, r, \alpha, \beta\right) r^n||\Lambda_n|-|1+\phi_{n}\left(R, r, \alpha, \beta\right)||\lambda_0|\Big)m}{2}\Bigg|^p\\\nonumber\times\int\limits_{0}^{2\pi}&\left|e^{i\gamma}+\dfrac{\splitfrac{\big|B[P^{\star}\circ\sigma]^{\star}(e^{i\theta})+\phi_{n}\left(R, r,\bar{\alpha},\bar{\beta}\right) B[P^{\star}\circ\rho]^{\star}(e^{i\theta})\big|}{-\dfrac{\Big(| R^n+\phi_{n}\left(R, r, \alpha, \beta\right) r^n||\Lambda_n|-|1+\phi_{n}\left(R, r, \alpha, \beta\right)||\lambda_0|\Big)m}{2}}}{\splitfrac{ \big|B[P\circ\sigma](e^{i\theta})+\phi_{n}(R,r,\alpha,\beta) B[P\circ\rho](e^{i\theta})\big|}{+\dfrac{\Big(| R^n+\phi_{n}\left(R, r, \alpha, \beta\right) r^n||\Lambda_n|-|1+\phi_{n}\left(R, r, \alpha, \beta\right)||\lambda_0|\Big)m}{2}}}\right|^pd\gamma\\\nonumber=\Bigg|&\big|B[P\circ\sigma](e^{i\theta})+ \phi_{n}\left(R, r, \alpha, \beta\right)B[P\circ\rho](e^{i\theta})\big|\\\nonumber&\quad\quad+\dfrac{\Big(| R^n+\phi_{n}\left(R, r, \alpha, \beta\right) r^n||\Lambda_n|-|1+\phi_{n}\left(R, r, \alpha, \beta\right)||\lambda_0|\Big)m}{2}\Bigg|^p\\\nonumber\times\int\limits_{0}^{2\pi}&\left|e^{i\gamma}+\left|\dfrac{\splitfrac{\big|B[P^{\star}\circ\sigma]^{\star}(e^{i\theta})+\phi_{n}\left(R, r,\bar{\alpha},\bar{\beta}\right) B[P^{\star}\circ\rho]^{\star}(e^{i\theta})\big|}{-\dfrac{\Big(| R^n+\phi_{n}\left(R, r, \alpha, \beta\right) r^n||\Lambda_n|-|1+\phi_{n}\left(R, r, \alpha, \beta\right)||\lambda_0|\Big)m}{2}}}{\splitfrac{ \big|B[P\circ\sigma](e^{i\theta})+\phi_{n}(R,r,\alpha,\beta) B[P\circ\rho](e^{i\theta})\big|}{+\dfrac{\Big(| R^n+\phi_{n}\left(R, r, \alpha, \beta\right) r^n||\Lambda_n|-|1+\phi_{n}\left(R, r, \alpha, \beta\right)||\lambda_0|\Big)m}{2}}}\right|^p\,\right|d\gamma\\\nonumber\geq &\Bigg|\big|B[P\circ\sigma](e^{i\theta})+ \phi_{n}\left(R, r, \alpha, \beta\right)B[P\circ\rho](e^{i\theta})\big|\\+&\dfrac{\Big(| R^n+\phi_{n}\left(R, r, \alpha, \beta\right) r^n||\Lambda_n|-|1+\phi_{n}\left(R, r, \alpha, \beta\right)||\lambda_0|\Big)m}{2}\Bigg|^p\int\limits_{0}^{2\pi}|1+e^{i\gamma}|^pd\gamma.
   \end{align}
  
  For 
   \begin{align*}
    \big|B[P\circ\sigma]&(e^{i\theta})+\phi_{n}(R,r,\alpha,\beta) B[P\circ\rho](e^{i\theta})\big|\\+&\dfrac{\Big(| R^n+\phi_{n}\left(R, r, \alpha, \beta\right) r^n||\Lambda_n|-|1+\phi_{n}\left(R, r, \alpha, \beta\right)||\lambda_0|\Big)m}{2}\neq 0,
    \end{align*}
   then \eqref{r6}  is trivially true. Using this in \eqref{r4}, we conclude for every $\alpha,\beta\in\mathbb{C}$ with $|\alpha|\leq 1,|\beta|\leq 1$ $R>r\geq 1$ and $ p > 0$,
  \begin{align*}
  \int\limits_{0}^{2\pi}&\Bigg|\big|B[P\circ\sigma](e^{i\theta})+ \phi_{n}\left(R, r, \alpha, \beta\right)B[P\circ\rho](e^{i\theta})\big|\\&+\dfrac{\Big(| R^n+\phi_{n}\left(R, r, \alpha, \beta\right) r^n||\Lambda_n|-|1+\phi_{n}\left(R, r, \alpha, \beta\right)||\lambda_0|\Big)m}{2}\Bigg|^pd\theta\int\limits_{0}^{2\pi}|1+e^{i\gamma}|^pd\gamma\\ \nonumber
  &\leq \int\limits_{0}^{2\pi}\Big|(R^{n}+\phi_{n}(R,r,\alpha,\beta)r^{n})\Lambda_{n}e^{i\gamma}+(1+\phi_{n}(R,r,\bar{\alpha},\bar{\beta}))\bar{\lambda_{0}}\Big|^{p}d\gamma\int_{0}^{2\pi}\left|P(e^{i\theta})\right|^{p}d\theta
  \end{align*}
  This gives for every $\delta,\alpha,\beta$ with $|\delta|\leq 1,$ $|\alpha|\leq 1,$ $|\beta|\leq 1,$ $R>r\geq 1$ and $\gamma$ real
  \begin{align}\nonumber\label{r7}
  \int\limits_{0}^{2\pi}&\Bigg|B[P\circ\sigma](e^{i\theta})+ \phi_{n}\left(R, r, \alpha, \beta\right)B[P\circ\rho](e^{i\theta})\\\nonumber+\delta&\dfrac{\Big(| R^n+\phi_{n}\left(R, r, \alpha, \beta\right) r^n||\Lambda_n|-|1+\phi_{n}\left(R, r, \alpha, \beta\right)||\lambda_0|\Big)m}{2}\Bigg|^pd\theta\int\limits_{0}^{2\pi}|1+e^{i\gamma}|^pd\gamma\\
   &\leq \int\limits_{0}^{2\pi}\Big|(R^{n}+\phi_{n}(R,r,\alpha,\beta)r^{n})\Lambda_{n}e^{i\gamma}+(1+\phi_{n}(R,r,\bar{\alpha},\bar{\beta}))\bar{\lambda_{0}}\Big|^{p}d\gamma\int_{0}^{2\pi}\left|P(e^{i\theta})\right|^{p}d\theta
  \end{align} 
  Since
  \begin{align}\nonumber\label{r8}
\int\limits_{0}^{2\pi}&\Big|(R^{n}+\phi_{n}(R,r,\alpha,\beta)r^{n})\Lambda_{n}e^{i\gamma}+(1+\phi_{n}(R,r,\bar{\alpha},\bar{\beta}))\bar{\lambda_{0}}\Big|^{p}d\gamma\int_{0}^{2\pi}\left|P(e^{i\theta})\right|^{p}d\theta\\\nonumber
 &=\int\limits_{0}^{2\pi}\Big||(R^{n}+\phi_{n}(R,r,\alpha,\beta)r^{n})\Lambda_{n}|e^{i\gamma}+|(1+\phi_{n}(R,r,\bar{\alpha},\bar{\beta}))\bar{\lambda_{0}}|\Big|^{p}d\gamma\int_{0}^{2\pi}\left|P(e^{i\theta})\right|^{p}d\theta\\\nonumber
 &=\int\limits_{0}^{2\pi}\Big||(R^{n}+\phi_{n}(R,r,\alpha,\beta)r^{n})\Lambda_{n}|e^{i\gamma}+|(1+\phi_{n}(R,r,\alpha,\beta))\lambda_{0}|\Big|^{p}d\gamma\int_{0}^{2\pi}\left|P(e^{i\theta})\right|^{p}d\theta,\\
  &=\int\limits_{0}^{2\pi}\Big|(R^{n}+\phi_{n}(R,r,\alpha,\beta)r^{n})\Lambda_{n}e^{i\gamma}+(1+\phi_{n}(R,r,\alpha,\beta))\lambda_{0}\Big|^{p}d\gamma\int_{0}^{2\pi}\left|P(e^{i\theta})\right|^{p}d\theta,
  \end{align}
  the desired result follows immediately by combining \eqref{r7} and \eqref{r8}. This completes the proof of Theorem \ref{t2} for $p>0$. To establish this result for $p=0$, we simply let $p\rightarrow 0+$.

   \end{proof}

\end{document}